\documentclass[11pt,reqno]{amsart}
\usepackage[T1]{fontenc}
\usepackage{lmodern}
\usepackage{amsmath,amssymb,mathtools}
\usepackage[letterpaper,textwidth=6.05in,textheight=8.8in,centering]{geometry}
\usepackage{microtype}
\usepackage{needspace}
\usepackage{tikz-cd}
\usepackage{hyperref}

\newtheorem{theorem}{Theorem}[section]
\newtheorem{proposition}[theorem]{Proposition}
\newtheorem{lemma}[theorem]{Lemma}
\newtheorem{corollary}[theorem]{Corollary}
\theoremstyle{definition}
\newtheorem{example}[theorem]{Example}
\theoremstyle{remark}
\newtheorem{remark}[theorem]{Remark}
\numberwithin{equation}{section}
\title[Degree-two Chow varieties]{Poincar\'e series of degree-two Chow varieties}

\author[Y. Chen]{Youming Chen}
\address{School of Mathematical Science, Chongqing University of Technology, Chongqing 400054, P.R. China}
\email{youmingchen@cqut.edu.cn}

\author[W. Hu]{Wenchuan Hu}
\address{School of Mathematics, Sichuan University, Chengdu 610064, P.R. China}
\email{huwenchuan@gmail.com}

\author[D. Ma]{Deyuan Ma}
\address{School of Mathematics, Sichuan University, Chengdu 610064, P.R. China}
\email{2024222010027@stu.scu.edu.cn}

\date{\today}
\subjclass[2020]{Primary 14C05; Secondary 14C25, 32S35, 55P62}
\keywords{Chow variety, mixed Hodge structure, Poincar\'e series, rational homotopy, Lawson--Michelsohn problem}
\begin{document}
\begin{abstract}
Let $C_{p,2}(\mathbb{P}^{n})$ denote the Chow variety parametrizing effective algebraic $p$-cycles of degree two on $\mathbb{P}^{n}$.
We show that its rational cohomology is pure Tate and concentrated in even degrees.
An anti-invariant Chern class identity yields a presentation of the cohomology ring as a fiber product,
which in turn determines the cohomology over $\mathbb{Z}[1/2]$.
We identify the ordinary Poincar\'e polynomial with the known virtual formula,
determine the optimal ranges over which the coefficients stabilize,
and compute the nonrational Poincar\'e series of the stable space $D(2)$.
For the Lawson–Michelsohn map $D(2)\longrightarrow D(\infty)$,
the first nonzero rational homotopy kernel occurs in degree eight and is one-dimensional,
while the integral kernel has rank one.
Finally, we compute the relative homology of the pair $(D(2),BU)$ over $\mathbb{Z}[1/2]$
and show that additional cohomology generators and nonzero rational homotopy kernels arise in unbounded degrees.
\end{abstract}
\maketitle

\section{Introduction}
Let $\mathbb{P}^{n}$ denote complex projective $n$-space and let
$C_{p,d}(\mathbb{P}^{n})$ be the Chow variety of effective algebraic
$p$-cycles of degree $d$, where $0\leq p\leq n$ and $d\geq1$.
These projective varieties need not be smooth or irreducible.
In degree two, the decomposition into sums of linear cycles and
irreducible quadrics gives a particularly explicit description
\cite[\S2, Eq.~(2.1), P.~2366]{Hu10}. To determine ordinary cohomology from this decomposition, one must
analyze the associated connecting homomorphisms.

The main contribution of this paper is an identity for anti-invariant
Chern classes that makes the relevant difference of restriction maps
surjective. It follows that $C_{p,2}(\mathbb{P}^{n})$ has pure Tate
rational cohomology and no odd rational cohomology. The same argument,
carried out over $\mathbb{Z}[1/2]$, gives a presentation of the cohomology
ring and excludes odd primary torsion in integral homology and
cohomology. Rational formality of the finite stage varieties is then an
application of Cirici and Horel's purity theorem
\cite[Thm.~8.1 and Prop.~8.9(1)]{CH20}.

We use Hu's virtual Poincar\'e formula
\cite[\S4.5, Ex.~4.2]{Hu21}. Purity identifies this established
formula with the ordinary Poincar\'e polynomial. We determine the
leading terms of the differences in the two stabilization limits and
prove that the iterated stable series is nonrational.
The geometric decomposition, together with the natural homotopy stability theorem
used to identify this series with the cohomology of $D(2)$, is due to \cite{Hu10}.

Lawson and Michelsohn introduced the degree filtration
$BU=D(1)\subset D(2)\subset\cdots\subset D(\infty)$ and asked whether
the maps to $D(\infty)$ induce injections on homotopy groups
\cite[\S7, Ques.~I, P.~263]{LM}. Hu answered this negatively for
degree two \cite[Thm.~1.2, P.~2366]{Hu10}; his argument already
uses homotopy groups only through degree nine
\cite[Rem.~2.7, P.~2369]{Hu10}. We determine the first failure after
rationalization: it occurs in degree eight, where the kernel is one
dimensional and $\pi_8(D(2))\otimes\mathbb{Q}\cong\mathbb{Q}^{2}$.
The corresponding integral kernel has rank one, and the conclusion
already holds at $C_{4,2}(\mathbb{P}^{8})$.

For comparison, Hu computed
$(b_{2s}-b_{2s+1})(C_{4,2}(\mathbb{P}^{n}))$, for $0\leq s\leq4$
and $n\geq9$, obtaining $(1,1,2,3,6)$
\cite[Prop.~2.6, P.~2369]{Hu10}.
The odd vanishing assertion in his Corollary~2.3 is conditional on the
injectivity hypothesis used in that paper's contradiction argument.
Our restriction map calculation determines both parities in every
degree and supplies the cohomological justification for using the
known virtual formula as an ordinary Poincar\'e polynomial.

We also compute the relative homology of $(D(2),BU)$ after inverting
$2$. This addresses the degree-two case of Lawson and Michelsohn's
Question~IV \cite[\S7, P.~263]{LM}, without determining possible
$2$-primary torsion. Finally, a growth comparison shows that the stable
cohomology is not finitely generated as an algebra over the classical
Chern class subring and that rational homotopy kernels occur in
infinitely many degrees.

Section~\ref{sec:main} states the main results.
Sections~\ref{sec:geometry} and~\ref{sec:purity} establish the
restriction map calculation and its consequences.
Section~\ref{sec:series} treats the Poincar\'e series, and
Section~\ref{sec:homotopy} gives the homotopy applications and relative
homology calculation. Examples appear in Section~\ref{sec:examples}.

\subsection*{Declaration concerning AI assistance}

ChatGPT was used as an auxiliary tool in manuscript preparation.
The authors are responsible for the final text and mathematical arguments.

\section{Main results}\label{sec:main}
All varieties are defined over $\mathbb{C}$ and have their reduced structures; irreducibility is not assumed.
All homology and cohomology are taken in the complex analytic topology.
For $n\geq0$ and $0\leq p\leq n$, we use the notation
\begin{equation*}
 X_{p,n}=C_{p,2}(\mathbb{P}^{n}),\quad
 H^m(Z)=H^m(Z;\mathbb{Q}),\quad b_m(Z)=\dim_{\mathbb{Q}}H^m(Z).
\end{equation*}
Unspecified homology and cohomology coefficients are rational.
For a commutative coefficient ring $K$, let
$H_K^m(Z)=H^m(Z;K)$ and $H_K^m(Z,A)=H^m(Z,A;K)$.
Throughout, $\Lambda$ denotes the localization $\mathbb{Z}[1/2]$.
Tensor products are over $\mathbb{Z}$ unless a base ring is specified.
For a space with finite Betti numbers in each degree, its Poincar\'e
series is $P_Z(t)=\sum_{m\geq0}b_m(Z)t^m$. For a pair $(Z,A)$,
the analogous series is defined using relative cohomology.
If $Z$ has finite CW type, its Euler characteristic is $\chi(Z)=P_Z(-1)$.
Negative degree groups are zero; empty sums and products are $0$ and
$1$, respectively.

Projectivization $\mathbb{P}(E)$ parametrizes lines in a vector bundle
$E$, with tautological subbundle $\mathcal{O}_{\mathbb{P}(E)}(-1)$.
For a bundle $E$ on $Z$, the Chern classes $c_j(E)$ belong to
$H^{2j}(Z;\mathbb{Z})$; the same notation denotes their images
after a change of coefficients. We adopt the conventions $c_0(E)=1$ and $c_j(E)=0$ outside
$0\leq j\leq\operatorname{rank}E$. We write
$c(E)=\sum_j c_j(E)$, use $E^\vee$ for a dual bundle or vector space,
$\operatorname{SP}^{d}(Z)=Z^d/\mathfrak{S}_d$ for the symmetric product,
where $\mathfrak{S}_d$ is the symmetric group, and
$\operatorname{Sym}^{d}(E)$ for the symmetric power of a bundle.
The Tate structure $\mathbb{Q}(-s)$ has weight $2s$ and Hodge type
$(s,s)$.

\Needspace{7\baselineskip}
\begin{theorem}\label{thm:purity}
For $n\geq0$, $0\leq p\leq n$, and $s\geq0$,
\begin{equation*}
 H^{2s+1}(X_{p,n})=0,\quad
 H^{2s}(X_{p,n})\cong\mathbb{Q}(-s)^{\oplus b_{2s}(X_{p,n})}
\end{equation*}
as rational mixed Hodge structures.
\end{theorem}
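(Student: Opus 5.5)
We plan to use the decomposition of degree-two cycles from \cite[\S2, Eq.~(2.1)]{Hu10}. A cycle in $X_{p,n}$ is either a sum $L_1+L_2$ of two linear $p$-planes or an irreducible quadric $p$-fold spanning a $\mathbb{P}^{p+1}$. This gives a closed subvariety $Y=\operatorname{SP}^2(G)$, where $G=G(p+1,n+1)$ is the Grassmannian. The complement $U=X_{p,n}\setminus Y$ is the set of irreducible quadrics. Let $\pi\colon E\to G(p+2,n+1)$ be the bundle whose fiber over $W$ is $\mathbb{P}(\operatorname{Sym}^2 W^\vee)$. Then $E$ is a smooth projective variety with a surjection $E\to X_{p,n}$. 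It is an isomorphism over $U$, and over $Y$ its preimage is the locus $E_Y$ of quadrics of rank at most $2$. So we have a proper surjection $E\to X_{p,n}$ that is an isomorphism off $Y$, and the mixed Hodge structures fit into the long exact sequence of this abstract blow-up square:
\begin{equation*}
\cdots\to H^{m}(X_{p,n})\to H^m(E)\oplus H^m(Y)\xrightarrow{\ \delta_m\ } H^m(E_Y)\to H^{m+1}(X_{p,n})\to\cdots
\end{equation*}
Here $\delta_m$ is the difference of the two restriction maps.

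The first step is to check that $E$, $Y$ and $E_Y$ all have pure Tate cohomology concentrated in even degrees.
\begin{itemize}
\item $E$ is a projective bundle over a Grassmannian.
\item $H^*(Y)=H^*(G\times G)^{\mathfrak{S}_2}$, and this is pure Tate of even degree because $G$ has a cell decomposition.
\item $E_Y$ maps to $Y$. Over the off-diagonal part, the fiber is the unique quadric $L_1\cup L_2$, so the map is an isomorphism there. Over the diagonal $\Delta\cong G$, the fiber is the set of spaces $W\supset L$ of dimension $p+2$, a $\mathbb{P}^{n-p-1}$. In fact $E_Y$ is a quotient of a smooth variety by $\mathfrak{S}_2$: it is a $\mathbb{P}^{n-p-1}$-blow-up of $\operatorname{SP}^2(G)$ along the diagonal. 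Its rational cohomology is therefore computed as invariants of a smooth projective Tate variety with a cell structure.
\end{itemize}
All three are compact, so their $H^m$ is pure of weight $m$, and each is Tate and vanishes in odd degrees.

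The long exact sequence then gives, for every $s$, a short exact sequence
\begin{equation*}
0\to\operatorname{coker}\delta_{2s-1}=0\to H^{2s}(X_{p,n})\to\ker\delta_{2s}\to0,
\end{equation*}
together with $H^{2s+1}(X_{p,n})\cong\operatorname{coker}\delta_{2s}$. Since $\ker\delta_{2s}$ is a sub-Hodge structure of a pure Tate structure of weight $2s$, this shows $H^{2s}(X_{p,n})\cong\mathbb{Q}(-s)^{b_{2s}}$. The odd vanishing is therefore exactly the surjectivity of $\delta_{2s}$. This is the main obstacle, and it is where the anti-invariant Chern class identity mentioned in the introduction is needed.

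To prove surjectivity, we would decompose $H^*(E_Y)$ using the $\mathfrak{S}_2$-cover $\widetilde{E}_Y\to E_Y$, where $\widetilde{E}_Y$ is the blow-up of $G\times G$ along the diagonal, or equivalently the space of pairs $(L_1,L_2)$ with a chosen $W\supset L_1+L_2$.
\begin{itemize}
\item The image of $H^*(Y)$ covers the invariant classes pulled back from $\operatorname{SP}^2(G)$.
\item The remaining classes are supported near the exceptional divisor. They have the form $\xi^k\cdot\alpha$, where $\xi$ is the exceptional or tautological class and $\alpha$ is pulled back from the diagonal.
\end{itemize}
We would show these remaining classes come from $H^*(E)$, by restricting Chern classes of the tautological bundle $W$ and of $\mathcal{O}(1)$ on $E$.

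The key computation is as follows. On $\widetilde{E}_Y$ we have $W/(L_1\cap L_2)$ and the line bundles $W/L_1$ and $W/L_2$. The anti-invariant combinations $c_j(W/L_1)-c_j(W/L_2)$, or equivalently differences of Chern classes of $L_1$ and $L_2$, need to be expressed through symmetric classes times the exceptional class. This would show that the symmetric classes restricted from $E$, namely $c(W)$ and the hyperplane class of the quadric, generate the complement of the image of $H^*(Y)$. We would first check this identity in the case $p=0$, where $G=\mathbb{P}^n$ and everything is explicit, and then run the same argument with the splitting principle for general $p$.
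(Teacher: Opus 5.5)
Your reduction is correct and is the paper's; your $E$, $Y$, $E_Y$ are the paper's $Y$, $R$, $B$. The sequence gives $H^{2s}(X_{p,n})\cong\ker\delta_{2s}$, hence purity, and odd vanishing is equivalent to surjectivity of $\delta_{2s}$. (A small point: $E\to X_{p,n}$ is not surjective for $1\le p\le n-2$, since two skew lines in $\mathbb{P}^3$ lie in no plane. This does not affect the sequence.)

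The gap is that this surjectivity, which is the whole content of the odd vanishing, is not proved. The mechanism you sketch also rests on a model of $E_Y$ that is valid only for $p=0$, which is exactly the trivial case $X_{0,n}=\operatorname{SP}^2(\mathbb{P}^n)$. Take $\widetilde E_Y$ to be the space of triples $(L_1,L_2,W)$; your second description is the correct one, and it is not the blow-up of $G\times G$. Then, for $p\ge1$:
\begin{itemize}
\item For $1\le p\le n-2$, the map $E_Y\to\operatorname{SP}^2(G)$ is not surjective. Over off-diagonal pairs spanning more than a $\mathbb{P}^{p+1}$ the fibre is empty.
\item The pullback $H^*(G\times G)\to H^*(\widetilde E_Y)$ need not be injective. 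For $p=1$, $n=3$ one has $\dim\widetilde E_Y=7<8=\dim G\times G$.
\item The locus $L_1=L_2$ in $\widetilde E_Y$ has codimension $p+1$.
\end{itemize}
So there is no exceptional divisor, and no decomposition of $H^*(E_Y)$ as ``classes from $\operatorname{SP}^2(G)$ plus $\xi^i\alpha$''. The splitting principle cannot repair a difference in geometry. Moreover, the identity you ask for (anti-invariant classes equal to symmetric classes times an invariant exceptional class) would force the nonzero anti-invariant class $c_1(W/L_1)-c_1(W/L_2)$ to vanish.

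What works, and what the paper does, replaces the exceptional divisor by the double-plane locus $D\subset E_Y$, consisting of flags $L\subset W$.

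First, $H^*(E)\to H^*(D)$ is onto. Classes from $G(p+2,n+1)$ restrict to themselves. The hyperplane class $h$ of $E$ restricts to $x+y$ on $\widetilde E_Y$, where $x=c_1(W/L_1)$ and $y=c_1(W/L_2)$, hence to $2c_1(W/L)$ on $D$, and $2$ is invertible. So it suffices to show that every class of $H^*(E_Y)=H^*(\widetilde E_Y)^{\mathfrak S_2}$ vanishing on $D$ comes from $\operatorname{SP}^2(G)$.

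Second, by the projective bundle formula for the $\mathbb{P}^{p+1}\times\mathbb{P}^{p+1}$-bundle $\widetilde E_Y$, the kernel of restriction to $D$ is the principal ideal $(x-y)$. An invariant element of it equals $(x-y)\beta^-$ with $\beta^-$ anti-invariant.

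The key lemma is that every anti-invariant class on $\widetilde E_Y$ comes from $G\times G$. Write $A$ for the image of $H^*(G\times G)$, and set $a_j=c_j(L_1)$, $b_j=c_j(L_2)$, $d_j=a_j-b_j$, $c=c_1(W)$. Then:
\begin{itemize}
\item $H^*(\widetilde E_Y)=A[c]$;
\item $A^-=\sum_jA^+d_j$;
\item the Whitney identity $c\,d_j=a_1a_j-b_1b_j-d_{j+1}$ gives $cA^-\subset A^-$.
\end{itemize}
Since $x-y=b_1-a_1\in A^-$, the product $(x-y)\beta^-$ lies in $A^+$, the image of $H^*(\operatorname{SP}^2 G)$.

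Your proposal names the right ingredients ($c(W)$, $\mathcal{O}(1)$, differences of Chern classes of $L_1,L_2$) but not this structure. Without it the odd vanishing remains unproved.
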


Let $q$ be an indeterminate. Define the polynomials
\begin{equation*}
 \Pi_r(q)=\sum_{a=0}^{r}q^a,\quad
 G_{r,N}(q)=\begin{bmatrix}N\\r\end{bmatrix}_{q}
 =\prod_{j=1}^{r}\frac{1-q^{N-r+j}}{1-q^j}.
\end{equation*}
Here $r\geq0$ in the first formula and $0\leq r\leq N$ in the
second. Thus $G_{r,N}$ is a Gaussian binomial polynomial and
$G_{0,N}=1$. Let $m_k=\binom{k+2}{2}-1$ for $k\geq1$. Whenever
$p<n$, we write $k=p+1$ and $N=n+1$. For a formal series $F(q)$,
$[q^s]F(q)$ denotes the coefficient of $q^s$.
Let $E(Z;u,v)$ be the compactly supported Hodge--Deligne polynomial
and define $P_Z^{\mathrm{vir}}(t)=E(Z;t,t)$, with the signed convention
specified in Section~\ref{sec:series}.

\begin{theorem}\label{thm:finite}
For $0\leq p<n$, let
\begin{equation}\label{eq:finite}
\begin{split}
 F_{p,n}(q)={}&\frac{G_{k,N}(q)^2+G_{k,N}(q^2)}{2}\\
 &+G_{k+1,N}(q)
 \left[\Pi_{m_k}(q)-\frac{\Pi_k(q)^2+\Pi_k(q^2)}{2}\right].
\end{split}
\end{equation}
Then
\begin{equation*}
 E(X_{p,n};u,v)=F_{p,n}(uv),\quad
 P_{X_{p,n}}(t)=P_{X_{p,n}}^{\mathrm{vir}}(t)=F_{p,n}(t^2).
\end{equation*}
For $p=n$, these identities hold with $F_{n,n}=1$.
The virtual polynomial in \eqref{eq:finite} is the formula of
\cite[\S4.5, Ex.~4.2]{Hu21}; the assertion here identifies it with
ordinary cohomology.
\end{theorem}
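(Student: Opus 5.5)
The plan is to compute the Hodge--Deligne polynomial $E(X_{p,n};u,v)$ by cutting and pasting along the degree-two decomposition of \cite{Hu10}, and then to deduce the statement about $P_{X_{p,n}}(t)$ from Theorem~\ref{thm:purity}. For $p=n$ the only effective $n$-cycle of degree two on $\mathbb{P}^n$ is $2[\mathbb{P}^n]$, so $X_{n,n}$ is a point and both identities hold with $F_{n,n}=1$. From now on let $p<n$, $k=p+1$, and $N=n+1$. Since $E$ is additive over locally closed stratifications, $X_{p,n}$ splits into a closed piece and an open piece. The closed piece $\operatorname{SP}^2(G)$ consists of sums of two linear $p$-planes, where $G=\mathrm{Gr}(k,N)$. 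The open piece $Q$ consists of irreducible quadric $p$-cycles.

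For the closed stratum, I use two facts. First, $E(G)=G_{k,N}(uv)$. Second, Macdonald's formula for symmetric squares, in its $E$-polynomial form, gives
\[
E(\operatorname{SP}^2 Z)=\tfrac12\bigl(E(Z)(u,v)^2+E(Z)(u^2,v^2)\bigr)
\]
whenever $Z$ has pure Tate cohomology. This yields the first term of \eqref{eq:finite} with $q=uv$.

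For the open stratum, every irreducible quadric $p$-cycle spans a unique $(p+1)$-plane. This gives a Zariski-locally trivial fibration $Q\to \mathrm{Gr}(k+1,N)$. Its fiber is the space of irreducible quadric hypersurfaces in $\mathbb{P}^{k}$, that is, in $\mathbb{P}(\operatorname{Sym}^2 V^\vee)\cong\mathbb{P}^{m_k}$ with $\dim V=k+1$. The reducible quadrics in $\mathbb{P}^{m_k}$ are exactly the image of $\operatorname{SP}^2(\mathbb{P}^{k\vee})$. This image is a pair of hyperplanes, possibly a double hyperplane, and the map onto it is bijective. So the fiber has $E$-polynomial
\[
\Pi_{m_k}(q)-\tfrac12\bigl(\Pi_k(q)^2+\Pi_k(q^2)\bigr),
\]
again by Macdonald. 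Zariski local triviality comes from the associated bundle of the frame bundle, and $E$ is multiplicative for such fibrations, so $E(Q)$ equals $G_{k+1,N}(q)$ times this bracket. Summing the two strata gives $E(X_{p,n};u,v)=F_{p,n}(uv)$. This matches Hu's virtual formula \cite[\S4.5, Ex.~4.2]{Hu21}, and if convenient I would simply cite that computation.

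Finally, I pass from $E$ to the Poincaré polynomial. $X_{p,n}$ is projective, so compactly supported cohomology equals ordinary cohomology. By Theorem~\ref{thm:purity}, $H^{2s}$ is pure Tate of weight $2s$ and odd cohomology vanishes. Hence the contribution of $H^{2s}$ to $E$ is exactly $b_{2s}(uv)^s$, with no cancellation between degrees. Therefore $E(X_{p,n};u,v)=\sum_s b_{2s}(uv)^s$, and substituting $u=v=t$ gives $P_{X_{p,n}}(t)=P^{\mathrm{vir}}_{X_{p,n}}(t)=F_{p,n}(t^2)$.

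The main obstacle is the scheme-theoretic identification of the strata. The Chow variety carries its reduced structure, and we need the bijective morphisms (Sym$^2$ onto sums of planes, and the fibration over $\mathrm{Gr}(k+1,N)$) to have the expected $E$-polynomials. I would handle this by noting that $E$ depends only on the underlying constructible structure. A bijective morphism of complex varieties induces isomorphisms on compactly supported cohomology as mixed Hodge structures after stratifying so that it becomes a homeomorphism onto its image on each piece. That reduces the question to the set-theoretic decomposition of \cite{Hu10}.
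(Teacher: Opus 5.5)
Your proposal is correct and follows the paper's route: it combines an $E$-polynomial formula with Theorem~\ref{thm:purity}, which forces $E(X_{p,n};u,v)=\sum_s b_{2s}(X_{p,n})(uv)^s$ and so identifies the virtual and ordinary Poincar\'e polynomials. The only difference is where the $E$-polynomial comes from: you rederive the two-variable version from the stratification $X_{p,n}=\operatorname{SP}^2(G)\sqcup Q$, the symmetric-square formula, and the Zariski-locally trivial fibration over $\operatorname{Gr}(k+1,N)$, whereas the paper cites Hu's one-variable virtual formula and recovers $E(X_{p,n};u,v)=F_{p,n}(uv)$ from purity.
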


All formal series limits below are coefficientwise: each coefficient
is eventually constant. The notation $O(q^a)$ denotes an element of
$q^a\mathbb{Q}[[q]]$. Define
\begin{equation*}
 U_r(q)=\prod_{j=1}^{r}(1-q^j)^{-1},\quad
 U(q)=\prod_{j\geq1}(1-q^j)^{-1},\quad U_0(q)=1.
\end{equation*}
The infinite product defines an element of $\mathbb{Z}[[q]]$.

\begin{theorem}\label{thm:stable}
For fixed $p\geq0$ and $k=p+1$,
\begin{equation}\label{eq:fixed}
\begin{split}
 F_p(q):=\lim_{n\longrightarrow\infty}F_{p,n}(q)
 ={}&\frac{U_k(q)^2+U_k(q^2)}{2}\\
 &+U_{k+1}(q)
 \left[\Pi_{m_k}(q)-\frac{\Pi_k(q)^2+\Pi_k(q^2)}{2}\right]
 \in\mathbb{Q}(q)\cap\mathbb{Z}[[q]].
\end{split}
\end{equation}
Moreover,
\begin{equation}\label{eq:stable}
 F_\infty(q):=\lim_{p\longrightarrow\infty}F_p(q)
 =\frac{U(q)^2+U(q^2)}{2}
 -\frac{q^2U(q)}{(1-q)(1-q^2)}.
\end{equation}
Neither $F_\infty(q)$ nor $F_\infty(t^2)$ is rational over $\mathbb{C}$.
\end{theorem}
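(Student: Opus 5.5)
The proof has three steps. First pass to the limit $n\to\infty$ termwise in \eqref{eq:finite}. Then pass to the limit $p\to\infty$ termwise in \eqref{eq:fixed}. Finally, prove nonrationality by a growth argument on the coefficients. All limits are taken $q$-adically in $\mathbb{Z}[[q]]$. Coefficientwise convergence is $q$-adic convergence, and $q$-adic limits commute with finite sums, with products, and with the substitution $q\mapsto q^2$. So it suffices to identify the limit of each building block.

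\textbf{Limit in $n$.} For fixed $r$, each factor $(1-q^{N-r+j})$ of $G_{r,N}(q)$ is $\equiv1 \pmod{q^{N-r+1}}$. Hence $G_{r,N}(q)\equiv U_r(q)\pmod{q^{N-r+1}}$, so $G_{r,N}(q)\to U_r(q)$, and likewise $G_{r,N}(q^2)\to U_r(q^2)$. The bracket in \eqref{eq:finite} does not depend on $N$. Applying this with $r=k$ and $r=k+1$ gives \eqref{eq:fixed}. The right-hand side is visibly a rational function, since $U_k$ and $U_{k+1}$ are finite products.

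Integrality follows in either of two ways. By Theorem~\ref{thm:finite}, each $F_{p,n}(q)$ has nonnegative integer coefficients (Betti numbers), so the limit lies in $\mathbb{Z}[[q]]$. Alternatively, for $f\in\mathbb{Z}[[q]]$ one has $f(q)^2\equiv f(q^2)\pmod 2$, so both halved expressions are integral.

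\textbf{Limit in $p$.} As $k\to\infty$ we have $U_k,U_{k+1}\to U$. Since $m_k\to\infty$ and $\Pi_r(q)\equiv(1-q)^{-1}\pmod{q^{r+1}}$, we get $\Pi_{m_k}\to(1-q)^{-1}$ and $\Pi_k\to(1-q)^{-1}$. The bracket therefore tends to
\[
\frac1{1-q}-\frac12\Bigl(\frac1{(1-q)^2}+\frac1{1-q^2}\Bigr)=\frac1{1-q}-\frac1{(1-q)(1-q^2)}=-\frac{q^2}{(1-q)(1-q^2)},
\]
which gives \eqref{eq:stable}.

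\textbf{Nonrationality.} Write $F_\infty(q)=\sum a_nq^n$. Suppose $F_\infty$ were rational over $\mathbb{C}$. Then $a_n=\sum_iP_i(n)\lambda_i^n$ for large $n$, with polynomials $P_i$. Moreover $\limsup|a_n|^{1/n}=\max|\lambda_i|$ is the reciprocal of the radius of convergence.

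We need two growth facts about $a_n$.
\begin{itemize}
\item[(a)] The coefficients are subexponential. Since $\log p(n)=O(\sqrt n)$, every summand has subexponential coefficients. Hence the radius is at least $1$, all $|\lambda_i|\le1$, and $|a_n|=O(n^d)$ for some $d$.
\item[(b)] The coefficients are superpolynomial. The coefficient of $q^n$ in $U(q)^2$ is $\sum_ip(i)p(n-i)\ge p(\lfloor n/2\rfloor)^2$, and the contribution of $U(q^2)$ is nonnegative. The subtracted term has $n$-th coefficient at most $n^2p(n)$. By Hardy--Ramanujan asymptotics, $\log p(\lfloor n/2\rfloor)^2\sim2\pi\sqrt{n/3}$, while $\log\bigl(n^2p(n)\bigr)\sim\pi\sqrt{2n/3}$. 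Since $2>\sqrt2$, we get $\log a_n\gtrsim c\sqrt n$ with $c>0$.
\end{itemize}
These two facts contradict each other, so $F_\infty$ is not rational.

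For $F_\infty(t^2)$, the $t^{2n}$-coefficient is $a_n$ and the odd coefficients vanish. The same dichotomy applies: a rational series in $t$ with subexponential coefficients has polynomially bounded coefficients, which is impossible here.

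The main obstacle is step (b): making the lower bound genuinely beat the subtracted term. Crude estimates suffice, for instance $e^{c_1\sqrt n}\le p(n)\le e^{c_2\sqrt n}$ with the constants chosen carefully. To avoid any delicate comparison of constants, I would use the precise Hardy--Ramanujan leading asymptotic $\log p(n)\sim\pi\sqrt{2n/3}$.
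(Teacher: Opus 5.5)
Your proof is correct. The two limit computations match the paper's: the congruence $G_{r,N}\equiv U_r\pmod{q^{N-r+1}}$, then termwise passage to the limit in $k$. Your identity $f(q)^2\equiv f(q^2)\pmod 2$ is a geometry-free check of integrality; the paper instead observes that the coefficients are stabilized Betti numbers.

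The genuine difference is the nonrationality step. The paper argues radially at $q=1-\varepsilon$. From $1-q^j\le j\varepsilon$ it gets $U(q)\ge 1/(M!\,\varepsilon^M)$ for every $M$. The subtracted term has the multiplicative form $\rho U$ with $\rho=q^2/((1-q)(1-q^2))\le\varepsilon^{-2}$, so $\rho U\le 6\varepsilon U^2$ and hence $F_\infty\ge U^2/4$. No rational function holomorphic on the disk can grow that fast as $q\to1^-$. You argue on coefficients instead: Cauchy--Hadamard and partial fractions give polynomially bounded coefficients, and Hardy--Ramanujan gives superpolynomial growth.

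The paper's route is fully elementary and needs no comparison of constants, and its inequality is reused for $F_{\mathrm{rel}}$ and in Theorem~\ref{thm:unbounded}. Yours, as written, depends on the sharp constant in $\log\mathsf{p}(n)\sim\pi\sqrt{2n/3}$, because you must beat a factor $\sqrt2$.

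You can drop that dependence. Since $q^2/((1-q)(1-q^2))=\sum_{i\ge0}\lfloor i/2\rfloor q^i$, one has $a_n\ge\frac12\sum_{i=0}^{n}\bigl(\mathsf{p}(i)-2\lfloor i/2\rfloor\bigr)\mathsf{p}(n-i)$. Every summand is nonnegative because $\mathsf{p}(i)\ge i$ for $i\ge1$, and the $i=0$ summand alone gives $a_n\ge\frac12\mathsf{p}(n)$. Then $\mathsf{p}(n)\ge[q^n]U_M(q)\gg_M n^{M-1}$ for every $M$ gives superpolynomial growth. This is the coefficientwise counterpart of the paper's bound $U\ge\prod_{j\le M}(1-q^j)^{-1}$.
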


Following Lawson and Michelsohn \cite[\S7, Eq.~ (7.1)--(7.2)]{LM},
define
\begin{equation*}
 D(d)=\operatorname*{colim}_{a,b\longrightarrow\infty}
 C_{a,d}(\mathbb{P}^{a+b}),\quad
 D(\infty)=\operatorname*{colim}_{d}D(d).
\end{equation*}
The transition maps in $a$ are algebraic suspension, and those in $b$
are ambient linear inclusions. The degree maps add a fixed linear
cycle. Choose compatible linear flags and basepoints given by $d$
times the distinguished linear $a$-plane. We use the weak topology on
these limits and CW models in homotopy arguments.
The classical identifications are
\begin{equation*}
 D(1)=BU,\quad
 K_{\mathrm{ev}}:=D(\infty)\simeq\prod_{j\geq1}'K(\mathbb{Z},2j),
\end{equation*}
where $BU$ is the classifying space of the stable unitary group,
the prime denotes the weak product (the union of finite products),
and $K(\mathbb{Z},2j)$ is an Eilenberg--MacLane space.
Denote the filtration maps and the stabilization map by
\begin{equation*}
 i:BU\longrightarrow D(2),\quad
 J:D(2)\longrightarrow K_{\mathrm{ev}},\quad
 s_{p,n}:X_{p,n}\longrightarrow D(2),\quad \vartheta=J\circ i.
\end{equation*}
For $j\geq1$, let $u_j\in H^{2j}(K_{\mathrm{ev}};\mathbb{Z})$ be
the pullback of the universal cohomology class of $K(\mathbb{Z},2j)$
under the projection determined by the chosen weak product identification,
and let $c_j\in H^{2j}(BU;\mathbb{Z})$ denote the universal
$j$-th Chern class. With the identifications of
\cite[Thm.~3.6 and Eq.~ (7.2)]{LM}, one has
$\vartheta^*u_j=c_j$.
For a based map $f$ and $m\geq2$, denote its induced homomorphism
on rational homotopy groups by
$f_{m,\mathbb{Q}}=\pi_m(f)\otimes\mathbb{Q}$. The spaces occurring below are simply connected.

\begin{theorem}\label{thm:first}
The ordinary Poincar\'e series of $D(2)$ is $F_\infty(t^2)$.
The map $J_{m,\mathbb{Q}}$ is an isomorphism for $2\leq m\leq7$, and
\begin{equation*}
 \pi_8(D(2))\otimes\mathbb{Q}\cong\mathbb{Q}^{2},\quad
 \ker J_{8,\mathbb{Q}}\cong\mathbb{Q}.
\end{equation*}
The integral kernel of $\pi_8(J):\pi_8(D(2))\longrightarrow\mathbb{Z}$
has rank one and contains an element of infinite order.
If $p\geq4$ and $n-p\geq4$, the composite $J\circ s_{p,n}$ has the
same properties on $\pi_8$. In particular, $C_{4,2}(\mathbb{P}^{8})$
is a finite stage example.
\end{theorem}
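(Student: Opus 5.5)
Granting Theorems~\ref{thm:purity}--\ref{thm:stable}, the plan has three parts: the Poincar\'e series, rational homotopy through degree eight, and the integral and finite-stage statements.

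\emph{Poincar\'e series.} I first identify $H^*(D(2))$ with the iterated coefficientwise limit of $H^*(X_{p,n})$. By Hu's homotopy stability theorem \cite{Hu10}, the suspension and inclusion maps induce isomorphisms on $H^m$ in a range growing with $p$ and $n-p$. Rational cohomology of a weak colimit of CW complexes is the inverse limit, and the system is eventually constant in each degree. Hence $b_{2s}(D(2))=[q^s]F_\infty(q)$ by Theorem~\ref{thm:finite} and Theorem~\ref{thm:stable}. The odd Betti numbers vanish by Theorem~\ref{thm:purity}. This gives $P_{D(2)}(t)=F_\infty(t^2)$.

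\emph{Rational homotopy.} Rationally $K_{\mathrm{ev}}$ is a product of $K(\mathbb{Q},2j)$, so $\pi_*(K_{\mathrm{ev}})\otimes\mathbb{Q}$ is $\mathbb{Q}$ in each even degree. Since $D(2)$ is an H-space (it is a colimit of topological monoids under cycle addition, as in \cite{LM}), it is rationally a product of Eilenberg--MacLane spaces. Thus $H^*(D(2))$ is free graded commutative on the dual of $\pi_*(D(2))\otimes\mathbb{Q}$. All generators are even, because the cohomology is even, so the ring is polynomial. I then expand
\[
F_\infty(q)=\prod_{s\ge1}(1-q^s)^{-g_s},
\]
where $g_s=\dim\pi_{2s}(D(2))\otimes\mathbb{Q}$, and compare with $U(q)=\prod(1-q^s)^{-1}$. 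The routine step is to compute the coefficients of $F_\infty$ through $q^4$ from \eqref{eq:stable} and solve for $g_s$. From the numbers $(1,1,2,3,6)$ quoted from \cite{Hu10}, with $F_\infty=1+q+2q^2+3q^3+6q^4+\cdots$ against $U=1+q+2q^2+3q^3+5q^4$, I expect $g_1=g_2=g_3=1$ and $g_4=2$.

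Next, $J^*u_j$ restricts to $c_j$ under $i^*$. Hence $J^*$ is injective on indecomposables in each even degree, and $J_{2s,\mathbb{Q}}$ is surjective for every $s$. Comparing dimensions then gives isomorphisms in degrees $2$ through $7$, together with $\pi_8(D(2))\otimes\mathbb{Q}\cong\mathbb{Q}^2$ and $\ker J_{8,\mathbb{Q}}\cong\mathbb{Q}$.

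\emph{Integral and finite-stage statements.} The integral kernel of $\pi_8(J)$ has rank equal to $\dim\ker J_{8,\mathbb{Q}}=1$, because $\pi_8(D(2))$ is finitely generated; this follows from the finite type of $D(2)$, since finite Betti numbers and simple connectivity give finitely generated homotopy groups. A rank-one kernel contains an element of infinite order. For $X_{p,n}$ with $p\ge4$ and $n-p\ge4$, I need $s_{p,n}$ to induce isomorphisms on $H^m$ for $m\le 9$ and hence, by rational Whitehead, on $\pi_8\otimes\mathbb{Q}$ and integrally modulo torsion. I will check this by comparing coefficients of $F_{p,n}$ and $F_\infty$ through $q^4$, plus the injection on $H^{10}$. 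The stated optimal stabilization ranges are what make $p=4$, $n=8$ suffice.

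\emph{Main obstacle.} I expect the hardest step to be this finite-stage claim. It requires the stability isomorphism to hold already at $C_{4,2}(\mathbb{P}^8)$ through degree $9$, and the corresponding integral statement, rather than only rationally in the limit. I would first try the explicit leading difference terms from Theorem~\ref{thm:stable}'s proof, together with Hu's connectivity estimates for algebraic suspension.
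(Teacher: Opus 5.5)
Your Poincar\'e series argument is fine and is essentially Lemma~\ref{lem:stablecohomology}.

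\textbf{The H-space step.} Your rational homotopy argument assumes that $D(2)$ is an H-space, and the justification fails. Cycle addition sends $C_{a,2}\times C_{a,2}$ to $C_{a,4}$, so it induces $D(2)\times D(2)\to D(4)$, not a multiplication on $D(2)$. The join of two degree-two cycles also has degree four. You have not exhibited any H-structure on $D(2)$, and the paper uses none. Without one, the Poincar\'e series does not determine rational homotopy. For example, $K(\mathbb{Q},2)$ and $S^2\times K(\mathbb{Q},4)$ both have even cohomology with series $1/(1-t^2)$, yet the second has $\pi_3\otimes\mathbb{Q}\neq0$. So two of your steps are unsupported: the factorization $F_\infty=\prod(1-q^s)^{-g_s}$ with $g_s=\dim\pi_{2s}(D(2))\otimes\mathbb{Q}$, and the inference ``$J^*$ injective on indecomposables, hence $J_{2s,\mathbb{Q}}$ surjective''.

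\textbf{The missing idea.} Use the filtration map $i$. The Lawson--Michelsohn formula $\pi_{2j}(\vartheta)=(j-1)!$ makes $\vartheta=J\circ i$ a rational homotopy equivalence. This has three consequences: $J_{m,\mathbb{Q}}$ is surjective, $i_{m,\mathbb{Q}}$ is injective, and $J^*H^*(K_{\mathrm{ev}})$ is a ring retract of $H^*(D(2))$. Hence $\dim H^m(D(2),BU)=b_m(D(2))-b_m(BU)$. Projection identifies $\pi_m(D(2),BU)\otimes\mathbb{Q}=\operatorname{coker}i_{m,\mathbb{Q}}$ with $\ker J_{m,\mathbb{Q}}$. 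Now apply the rational relative Hurewicz theorem with Betti numbers $(1,1,2,3,6)$ against $(1,1,2,3,5)$. This gives zero kernel for $m<8$ and $\ker J_{8,\mathbb{Q}}\cong H_8(D(2),BU;\mathbb{Q})\cong\mathbb{Q}$, which is Proposition~\ref{prop:firstkernel}.

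\textbf{Finite stages.} Comparing coefficients of $F_{p,n}$ and $F_\infty$ shows only that dimensions agree, not that $s_{p,n}^*$ is an isomorphism. The injectivity on $H^{10}$ that you plan to check is also false at $(p,n)=(4,8)$. By Corollary~\ref{cor:sharp}, $F_{4,8}-F_\infty=-q^5+O(q^6)$, so $b_{10}(X_{4,8})=8<9=B_5$. That injectivity is not needed for $\pi_8$ anyway.

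What you need is Hu's stability theorem on homotopy groups with its explicit range: $\pi_m(s_{p,n})$ is an isomorphism for $m\leq\min\{2p+1,2(n-p)\}$. This range reaches $8$ when $p\geq4$ and $n-p\geq4$, and is exactly $8$ at $(4,8)$. It identifies the integral groups $\pi_8(X_{p,n})$ and $\pi_8(D(2))$ compatibly with $J$. All the $\pi_8$ statements then transfer directly, with no Whitehead argument.

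Finally, finite rational Betti numbers do not by themselves give finitely generated homotopy groups; that requires finitely generated integral homology. You do not need finite generation, because exactness of $-\otimes\mathbb{Q}$ already gives $\operatorname{rank}\ker\pi_8(J)=\dim\ker J_{8,\mathbb{Q}}$.
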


For a nonnegatively graded algebra $E$, write
$E^{>0}=\bigoplus_{m>0}E^m$ and let $(E^{>0})^2$ be the span of
products of two positive degree elements.
A graded algebra over a field is connected when its degree zero part
is that field.

\begin{theorem}\label{thm:unbounded}
Let $H=H^*(D(2))$ and $A_{\mathrm{cl}}=J^*H^*(K_{\mathrm{ev}})$.
The algebra $H$ is not finitely generated as an $A_{\mathrm{cl}}$-algebra.
Equivalently, the graded vector space
\begin{equation*}
 Q_{\mathrm{cl}}(H)=
 H^{>0}\big/\bigl((H^{>0})^2+A_{\mathrm{cl}}^{>0}\bigr)
\end{equation*}
is infinite dimensional. It is nonzero in infinitely many even degrees,
all at least eight. Furthermore, $\ker J_{m,\mathbb{Q}}\neq0$ for
infinitely many $m\geq8$. In each such degree the integral kernel
contains an element of infinite order. These homotopy conclusions
hold for $J\circ s_{p,n}$ whenever
$m\leq\min\{2p+1,2(n-p)\}$.
\end{theorem}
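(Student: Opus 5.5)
The plan is a growth comparison between the Poincaré series of $H$ and that of any finitely generated $A_{\mathrm{cl}}$-algebra, combined with rational formality and minimal models.

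\textbf{Step 1: $H$ is not finitely generated.} By Theorem~\ref{thm:first}, $P_{D(2)}(t)=F_\infty(t^2)$. Formula \eqref{eq:stable} gives $F_\infty(q)\sim U(q)^2/2$ as $q\to1^-$, so the coefficients grow like those of $U(q)^2$, roughly $\exp(2\pi\sqrt{s/3})$. The subalgebra $A_{\mathrm{cl}}$ is a quotient of $H^*(K_{\mathrm{ev}};\mathbb{Q})=\mathbb{Q}[u_1,u_2,\dots]$ and contains $i$-detectable classes; its series is coefficientwise at most $U(q)$.

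Suppose $H$ were generated over $A_{\mathrm{cl}}$ by finitely many classes $x_1,\dots,x_r$. Then $H$ is a quotient of $A_{\mathrm{cl}}[x_1,\dots,x_r]$, and all $x_i$ have even degree by Theorem~\ref{thm:purity}. So its series is coefficientwise bounded by
\[
U(q)\prod_{i=1}^{r}(1-q^{\deg x_i/2})^{-1}.
\]
The coefficients of this bound are $\exp(\pi\sqrt{2s/3}+O(\log s))$, since multiplying by a finite product only adds a polynomial factor. This contradicts the $\exp(2\pi\sqrt{s/3})$ growth.

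The equivalence with $\dim Q_{\mathrm{cl}}(H)=\infty$ is the graded Nakayama lemma for connected graded algebras. Lifts of a basis of $Q_{\mathrm{cl}}(H)$ generate $H$ over $A_{\mathrm{cl}}$, and conversely any generating set surjects onto $Q_{\mathrm{cl}}(H)$. By purity $Q_{\mathrm{cl}}(H)$ lives in even degrees, so it is nonzero in infinitely many even degrees.

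For the lower bound of eight, I would check from \eqref{eq:stable} that $b_{2s}(D(2))=p(s)$ for $s\le3$. Then $J^*$ is surjective in degrees below eight; Theorem~\ref{thm:first} gives this through degree seven. Hence $Q_{\mathrm{cl}}$ vanishes there.

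\textbf{Step 2: homotopy kernels.} $D(2)$ is a colimit of the pure varieties $X_{p,n}$. It is rationally formal: either pass Cirici--Horel formality through the stable range, or observe that the cohomology is even, so the minimal model can be built from the cohomology algebra. Since $K_{\mathrm{ev}}$ is rationally a product of even Eilenberg--MacLane spaces, $J_{m,\mathbb{Q}}$ is zero for odd $m$ and is the dual of the map on indecomposables $Q(H^*(K_{\mathrm{ev}}))\to V^m$ for even $m$, where $V$ is the space of minimal-model generators of $D(2)$.

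The degree-$m$ generators of the minimal model of the formal space $D(2)$ surject onto $Q(H)^m$. Classes in $Q_{\mathrm{cl}}(H)^m$ are not hit by $J^*u_j$ modulo decomposables. So $\operatorname{Hom}(V^m,\mathbb{Q})\cong\pi_m\otimes\mathbb{Q}$ contains a functional vanishing on the image of $u_{m/2}$ but nonzero on a lift of such a class. That functional lies in $\ker J_{m,\mathbb{Q}}$, giving a nonzero kernel in each of the infinitely many degrees $m\ge8$ from Step 1.

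An element of the rational kernel is $x\otimes\lambda$ with $x$ integral. Then $\pi_m(J)(x)$ is torsion in $\pi_m(K_{\mathrm{ev}})=\mathbb{Z}$, hence zero, so $x$ itself is an integral kernel element of infinite order.

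\textbf{Step 3: finite stages.} For $J\circ s_{p,n}$, I would invoke the homotopy stability of \cite{Hu10}: $s_{p,n}$ is $\min\{2p+1,2(n-p)\}$-connected, as was used in Theorem~\ref{thm:first}. This makes $\pi_m(s_{p,n})$ surjective in the stated range, and the kernel elements lift.

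\textbf{Main obstacle.} The delicate step is making the generating-function growth comparison rigorous. One has to confirm that the $q^2U(q)/((1-q)(1-q^2))$ correction and the $U(q^2)$ term do not spoil the $U(q)^2$ asymptotics. I would use Meinardus-type asymptotics, or simply $U(q)^2\ge U(q)\cdot q^{-c}(1-q)^{-3}U(q)$ termwise dominance for large $s$. A second point is justifying formality of the colimit; if that fails, I would run the minimal-model argument on $X_{p,n}$ for large $p,n$ and pass to the limit.
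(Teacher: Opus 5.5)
Your Step~1 is the paper's growth comparison, done on coefficients instead of analytically. You do not need Meinardus-type asymptotics, nor the passage from $F_\infty\sim U^2/2$ as $q\to1^-$ to coefficient growth. All series involved have nonnegative coefficients, so the coefficientwise bound $F_\infty(q)\le U(q)\prod_j(1-q^{d_j})^{-1}$ can be evaluated at real $0<q<1$. The paper's elementary estimates $U(q)\ge 1/(M!(1-q)^M)$ and $F_\infty(q)/U(q)\ge U(q)/4$ near $q=1$ then give the contradiction directly.

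Step~2 takes a genuinely different route from the paper. The paper never uses formality. Set $\kappa_m=\dim\ker J_{m,\mathbb{Q}}$; rational surjectivity of $J_*$ gives $\dim\mathcal{V}^m=1+\kappa_m$ or $\kappa_m$ according to parity. If only finitely many $\kappa_m$ were nonzero, the minimal model would have Hilbert series $U(t^2)\Phi(t)$ with $\Phi$ a finite product. Then $F_\infty(t^2)/U(t^2)\le\Phi(t)$, which contradicts the same growth estimate. That argument is cruder, since it does not locate the kernel degrees, but it needs no formality. Your argument, once formality is in hand, is sharper. It gives $\dim\ker J_{m,\mathbb{Q}}\ge\dim Q_{\mathrm{cl}}(H)^m$ in every even degree, so the homotopy kernels are tied to the new generators.

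The formality input must be supplied correctly, though. Your second justification is false: even rational cohomology does not imply formality. Take $X=(S^2\vee S^2)\cup_\alpha e^6$ with $\alpha=[\iota_1,[\iota_1,[\iota_1,\iota_2]]]$.
\begin{itemize}
\item $X$ has only even cells, and its cohomology ring is that of $S^2\vee S^2\vee S^6$.
\item $\alpha$ is rationally nonzero: in the free Lie algebra on odd $x,y$ it equals $\tfrac12[[x,x],[x,y]]\neq0$.
\item Hence the minimal Quillen model $(\mathbb{L}(x,y,w),\partial w=\alpha)$ is not isomorphic to the one with zero differential, so $X$ is not formal.
\end{itemize}
Formality of the colimit $D(2)$ also does not follow automatically from formality of the finite stages.

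What your argument actually uses is weaker: injectivity, in the given degree $m$, of the linear-part map $Q(H)^m\to\mathcal{V}^m$, which is dual to the rational Hurewicz map. This holds for $X_{p,n}$ by Cirici--Horel formality. For $m<M_{p,n}$ it transfers to $D(2)$, because $s_{p,n}$ induces isomorphisms on $\pi_m\otimes\mathbb{Q}$ and on $H^{\le m}$, compatibly with Hurewicz. So your fallback is the right way to finish Step~2: run the argument for $J\circ s_{p,n}$ at a finite stage with $m<M_{p,n}$, then use stability. Discard the parity shortcut.
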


\section{Geometry and cohomological preliminaries}\label{sec:geometry}

In this section and Section~\ref{sec:purity}, assume $1\leq p<n$
unless otherwise stated. The classification of degree two cycles used
below is \cite[\S2, Eq.~(2.1), P.~2366]{Hu10}.
Irreducible quadrics are understood to include singular irreducible
quadrics; their ranks are at least three. This is the irreducible locus
of the complement appearing in that decomposition.

Let
\begin{equation*}
 V=\mathbb{C}^{n+1},\quad k=p+1,\quad
 G=\operatorname{Gr}(k,V),\quad T=\operatorname{Gr}(k+1,V),
\end{equation*}
where $\operatorname{Gr}(r,V)$ parametrizes $r$-dimensional subspaces.
Let $S$ be the rank-$(k+1)$ tautological subbundle on $T$, and define
\begin{equation*}
 R=\operatorname{SP}^{2}(G),\quad
 Y=\mathbb{P}(\operatorname{Sym}^{2}(S^\vee)),\quad
 D=\mathbb{P}(S^\vee),\quad
 \widetilde B=D\times_TD,\quad B=\widetilde B/\mathfrak{S}_2.
\end{equation*}
A point of $D$ is a flag $E\subset W\subset V$ with
$\dim E=k$ and $\dim W=k+1$. Write $\pi:D\longrightarrow T$ and
$\operatorname{pr}_1,\operatorname{pr}_2:\widetilde B\longrightarrow D$
for the projections. The involution on $\widetilde B$ exchanges its
two flags. Multiplication of linear forms defines
\begin{equation*}
 j:B\longrightarrow Y,\quad
 (W,\{\ker\ell_1,\ker\ell_2\})\longmapsto(W,[\ell_1\ell_2]).
\end{equation*}
This is a homeomorphism onto the closed rank at most two locus.
The map $f:B\longrightarrow R$ forgets $W$.
For $X=X_{p,n}$, addition of cycles gives $r:R\longrightarrow X$,
and the universal family of quadrics gives $g:Y\longrightarrow X$.
The map $r$ identifies $R$ homeomorphically with the decomposable locus,
and $g\circ j=r\circ f$.
The spaces $Y,B$ are the spaces denoted $A_{p,n},B_{p,n}$ in
\cite[\S2, P.~2368]{Hu10}. Only the stated topological identifications
are needed.

\begin{proposition}
The square
\begin{equation*}
\begin{tikzcd}[column sep=large,row sep=large]
 B\arrow[r,"j"]\arrow[d,"f"']&Y\arrow[d,"g"]\\
 R\arrow[r,"r"']&X
\end{tikzcd}
\end{equation*}
is a pushout of compact Hausdorff spaces. For every commutative
coefficient ring $K$, it induces
$H_K^m(X,R)\cong H_K^m(Y,B)$ and the exact sequence
\begin{equation}\label{eq:mv}
 \cdots\longrightarrow H_K^m(X)
 \xrightarrow{(g^*,r^*)}H_K^m(Y)\oplus H_K^m(R)
 \xrightarrow{j^*-f^*}H_K^m(B)
 \longrightarrow H_K^{m+1}(X)\longrightarrow\cdots.
\end{equation}
\end{proposition}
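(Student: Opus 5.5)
Our plan is to verify the pushout property directly from the set-theoretic description of degree two cycles, and then to derive the cohomological statements from standard facts about compact Hausdorff pairs.

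\emph{The pushout.} Every space in the square is a compact Hausdorff space: $R$, $Y$ and $X$ are projective varieties, and $B$ is the quotient of the projective variety $\widetilde B$ by a finite group. All four maps are continuous. Let $P=Y\sqcup_B R$ be the adjunction space formed from the closed embedding $j$ and the map $f$. Since $g\circ j=r\circ f$, there is an induced continuous map $\phi:P\to X$.

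By the classification \cite[\S2, Eq.~(2.1)]{Hu10}, every point of $X$ is one of two kinds: a sum of two linear $p$-planes, which is the image of $r$, or an irreducible quadric of rank at least three. The latter lies in a unique $(p+1)$-plane $W$ and is given by a unique form up to scalar. Hence $\phi$ is surjective, and $g$ restricts to a bijection from $Y\setminus j(B)$ onto $X\setminus r(R)$. Since $r$ is injective, $\phi$ is injective on the image of $R$. These two pieces have disjoint images in $X$, so $\phi$ is a continuous bijection. Because $P$ is compact and $X$ is Hausdorff, $\phi$ is a homeomorphism.

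Before using this, we check that $P$ is Hausdorff. An adjunction space of compact Hausdorff spaces along a closed subspace is compact Hausdorff, because the equivalence relation is closed; alternatively, injectivity of $\phi$ into the Hausdorff space $X$ already gives separation. The one delicate point is that a rank at most two quadric in $W$, namely $\ell_1\ell_2$, defines the cycle $\ker\ell_1+\ker\ell_2$. When $\ell_1\propto\ell_2$, the quadric $\ell_1^2$ gives the doubled plane $2E$, which is consistent with $r\circ f$. This is exactly the identity $g\circ j=r\circ f$ already stated in the paper, so no extra work is needed there.

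\emph{Cohomology.} Here we use \v{C}ech (equivalently Alexander--Spanier) cohomology. It agrees with singular cohomology on all four spaces, because they are triangulable: compact semialgebraic sets, and finite quotients of them, admit triangulations in which the closed subvarieties $j(B)\subset Y$ and $r(R)\subset X$ are subcomplexes. For \v{C}ech cohomology of compact Hausdorff pairs, a relative homeomorphism $(Y,B)\to(X,R)$, that is, a closed map of pairs inducing a homeomorphism $Y\setminus B\to X\setminus R$, induces an isomorphism $H_K^m(X,R)\cong H_K^m(Y,B)$. This is the strong excision property, a consequence of the tautness of closed subsets.

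To obtain \eqref{eq:mv}, map the long exact sequence of $(X,R)$ to that of $(Y,B)$ via $(g,f)$. Every third map in this ladder is the isomorphism just established, so the standard braid/ladder lemma yields the Mayer--Vietoris sequence with maps $(g^*,r^*)$ and $j^*-f^*$. The connecting homomorphism is $H_K^m(B)\to H_K^{m+1}(Y,B)\cong H_K^{m+1}(X,R)\to H_K^{m+1}(X)$. Signs can be fixed so that the middle map is $j^*-f^*$.

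\emph{Main obstacle.} We expect the main difficulty to be justifying the triangulability and the strong excision, so that the relative isomorphism holds for singular cohomology with arbitrary coefficients $K$. We would cite \L{}ojasiewicz/Hironaka triangulation of semialgebraic pairs, which makes $(Y,B)$ and $(X,R)$ CW pairs. Then the quotient maps $Y/B\to X/R$ are continuous bijections of compact Hausdorff spaces, hence homeomorphisms, and excision for good CW pairs finishes the proof.
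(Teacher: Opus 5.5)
Your proof is correct, and for the most part it follows the paper. The pushout step is the paper's argument: a continuous bijection $Y\cup_BR\to X$ from a compact space to a Hausdorff space, with injectivity on the two strata coming from the classification. Your observation that forms of rank at least three give irreducible cycles is what yields $g^{-1}(r(R))=j(B)$, which the paper phrases as ``all its preimages in $Y$ lie in $B$.'' Your separate check that the adjunction space is Hausdorff is unnecessary, since a continuous bijection from a compact space to a Hausdorff space is automatically a homeomorphism. For the relative isomorphism, your fallback (semialgebraic triangulation, $Y/B\cong X/R$, excision for good pairs) is exactly what the paper does.

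There are two genuine differences. First, your \v{C}ech strong excision argument gives $H_K^m(X,R)\cong H_K^m(Y,B)$ for arbitrary $K$ without building CW structures. However, you still need triangulability to identify \v{C}ech with singular cohomology, so it does not really avoid the paper's input. Second, you obtain the Mayer--Vietoris sequence by applying the Barratt--Whitehead ladder lemma to the long exact sequences of $(X,R)$ and $(Y,B)$. The paper instead uses that $j$ is a cofibration to replace the pushout by a double mapping cylinder, and takes the Mayer--Vietoris sequence of that. Your version uses only the relative isomorphism and naturality, and it exhibits the connecting map explicitly as $H_K^m(B)\to H_K^{m+1}(Y,B)\cong H_K^{m+1}(X,R)\to H_K^{m+1}(X)$. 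The paper's version needs the cofibration property, but produces the sequence directly from a space-level model.
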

\begin{proof}
An irreducible cycle has a unique preimage in $Y$,  determined by its linear span.
A decomposable cycle has a unique preimage in $R$,
and all its preimages in $Y$ lie in $B$ and are identified with that point by $f$.
For a double plane $2[\mathbb{P}(E)]$, the choices of $W$ form $\mathbb{P}(V/E)$.
For two distinct $k$-dimensional subspaces $E_1,E_2\subset V$,
such a $W$ exists precisely when $\dim(E_1+E_2)=k+1$; in that case $W=E_1+E_2$.
Consequently $Y\cup_B R\longrightarrow X$ is a continuous bijection
from a compact space to a Hausdorff space, hence a homeomorphism.

Compatible  semialgebraic triangulation
\cite[Thm.~9.2.1, P. ~217]{BCR} makes $(Y,j(B))$ and $(X,r(R))$
finite CW pairs, since the distinguished subsets are closed.
The quotient identification $Y/B\cong X/R$ gives the relative
comparison \cite[Prop.~2.22, P.~124]{Hatcher}.
Since $j$ is a cofibration, the pushout is represented up to homotopy
by the double mapping cylinder. Its cohomological Mayer--Vietoris
sequence is \eqref{eq:mv}; see \cite[\S2.2, P.~152; \S3.1]{Hatcher}.
\end{proof}

We recall the cohomological results used in the sequel. If a finite group $\Gamma$ acts algebraically on a
projective variety $Z$ and $|\Gamma|$ is invertible in $K$, the quotient
map induces the finite quotient comparison
\begin{equation}\label{eq:quotient}
 H_K^*(Z/\Gamma)\xrightarrow{\ \cong\ }H_K^*(Z)^\Gamma.
\end{equation}
This is a consequence of proper base change for finite fibers \cite[Tag~09V5, Lem.~20.18.1]{Stacks}
and together with the exactness of the invariant functor.
No freeness of the action is required. For an equivariant homomorphism $\varphi:M\longrightarrow N$
of $K[\Gamma]$-modules, exactness of invariants also gives
$\varphi(M^\Gamma)=(\operatorname{im}\varphi)^\Gamma$.
Over $\mathbb{Q}$, \eqref{eq:quotient} respects mixed Hodge structures
by algebraic functoriality. We use Deligne's functoriality and
strictness throughout \cite[Thm.~2.3.5]{DeligneII} and \cite[Prop.~8.2.2, 8.2.10 and~8.3.9]{DeligneIII}.

The integral cohomology of a complex Grassmannian is free, concentrated
in even degrees, and generated by tautological Chern classes
\cite[\S9.4, P.~146--148; \S10.6, Prop.~8--9, P.~178--180]{Fulton}.
We also use the integral projective bundle formula with the line
convention of \cite[App.~B.4, Eq.~(32)--(33), P.~223]{Fulton}:
for a rank-$e$ bundle $E$ on a smooth projective variety $Z$,
$H^*(\mathbb{P}(E);\mathbb{Z})$ is free over $H^*(Z;\mathbb{Z})$
with basis $1,\zeta,\ldots,\zeta^{e-1}$, where
$\zeta=c_1(\mathcal{O}_{\mathbb{P}(E)}(1))$. The relation is
\begin{equation}\label{eq:pb}
 \zeta^e+c_1(E)\zeta^{e-1}+\cdots+c_e(E)=0,
\end{equation}
with pullbacks from the base suppressed.

For the remainder of Sections~\ref{sec:geometry}--\ref{sec:purity},
$K$ denotes either $\mathbb{Q}$ or $\Lambda$ unless specified otherwise.
The preceding facts imply that $G,T,Y,\widetilde B,R,B$ have
cohomology over $K$ concentrated in even degrees. Over $\Lambda$ it
is finite free in each degree; for $R,B$,
the invariants are direct summands of finite free modules over the principal ideal domain $\Lambda$.

Let $q_B:\widetilde B\longrightarrow B$ be the quotient,
$\delta:D\longrightarrow\widetilde B$ the diagonal, and
$\iota=q_B\circ\delta$. Define
\begin{equation*}
 \xi=c_1(\mathcal{O}_D(1)),\quad
 x=\operatorname{pr}_1^*\xi,\quad y=\operatorname{pr}_2^*\xi.
\end{equation*}
Let $E_1,E_2\subset S$ be the two universal hyperplane subbundles on
$\widetilde B$, with pullbacks understood. We use the notation
\begin{equation*}
 a_j=c_j(E_1),\quad b_j=c_j(E_2),\quad e_j=c_j(S),\quad
 c=e_1,\quad d_j=a_j-b_j.
\end{equation*}
Here $a_0=b_0=e_0=1$, and $a_j=b_j=0$ for $j>k$.
The tautological sequence on $D$ identifies
$x=c_1(S/E_1)$ and $y=c_1(S/E_2)$. Whitney's formula gives
\begin{equation}\label{eq:whitney}
 e_j=a_j+xa_{j-1}=b_j+yb_{j-1},\quad
 x=c-a_1,\quad y=c-b_1.
\end{equation}
Write $K_T=H_K^*(T)$ and
$\Theta(z)=\sum_{j=0}^{k+1}(-1)^j e_jz^{k+1-j}$.
The projective bundle formula gives
\begin{equation}\label{eq:diagonalrings}
 H_K^*(D)=K_T[\xi]/(\Theta(\xi)),\quad
 H_K^*(\widetilde B)=K_T[x,y]/(\Theta(x),\Theta(y)).
\end{equation}
The signs come from $c_j(S^\vee)=(-1)^je_j$.

\begin{lemma}\label{lem:diagonal}
The map $\delta^*:H_K^*(\widetilde B)\longrightarrow H_K^*(D)$ is
surjective and has kernel $(x-y)H_K^*(\widetilde B)$.
\end{lemma}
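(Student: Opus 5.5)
Both rings in \eqref{eq:diagonalrings} are explicit, so I will compute $\delta^*$ from these presentations. Since $\operatorname{pr}_1\circ\delta=\operatorname{pr}_2\circ\delta=\mathrm{id}_D$, we have $\delta^*x=\delta^*y=\xi$, and $\delta^*$ is $K_T$-linear because $\delta$ lies over $T$. Hence $\delta^*$ is the $K_T$-algebra map
\begin{equation*}
 K_T[x,y]/(\Theta(x),\Theta(y))\longrightarrow K_T[\xi]/(\Theta(\xi)),\quad x,y\longmapsto\xi.
\end{equation*}
Surjectivity is immediate, since $\xi$ generates the target over $K_T$ and $\delta^*x=\xi$. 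Also $x-y$ lies in the kernel, so it remains to prove $\ker\delta^*\subseteq(x-y)H_K^*(\widetilde B)$.

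For the kernel I will use the quotient ring $Q=H_K^*(\widetilde B)/(x-y)$. Setting $x=y$ in the presentation gives
\begin{equation*}
 Q=K_T[x,y]/(\Theta(x),\Theta(y),x-y)\cong K_T[x]/(\Theta(x)).
\end{equation*}
This is exactly $H_K^*(D)$. The map $\delta^*$ factors through $Q$, and the induced map $Q\to H_K^*(D)$ sends $x\mapsto\xi$. Under the identification above it is the identity of $K_T[\xi]/(\Theta(\xi))$, so it is an isomorphism. Therefore $\ker\delta^*=(x-y)$.

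The point needing care is that \eqref{eq:diagonalrings} is an honest presentation over $K$ and not only over $\mathbb{Q}$. This holds because the projective bundle formula \eqref{eq:pb} is integral. It is applied twice: once to $D\to T$, and once to $\widetilde B=\operatorname{pr}_1^*D\to D$. The second application uses the pullback of $S^\vee$, which has the same Chern classes, so the same polynomial $\Theta$ appears. Freeness over the base then gives the tensor product description $H^*(D)\otimes_{K_T}H^*(D)$.

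The quotient computation is purely formal, since $\Theta$ is monic in each variable; no division by $2$ or any other integer is needed. So the argument works for $K=\mathbb{Q}$ and $K=\Lambda$ alike, and in fact over $\mathbb{Z}$.

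As a cross-check, $\Theta(x)-\Theta(y)=(x-y)\,h(x,y)$ for some polynomial $h$. Consequently both $\ker\delta^*$ and $(x-y)H_K^*(\widetilde B)$ are free $K_T$-modules of rank $(k+1)^2-(k+1)=k(k+1)$.
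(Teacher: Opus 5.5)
Your proposal is correct and follows the paper's proof: the paper likewise reads $\delta^*$ off the presentations \eqref{eq:diagonalrings} as the $K_T$-algebra map $x,y\mapsto\xi$, and observes that the quotient by $(x-y)$ is exactly $K_T[\xi]/(\Theta(\xi))=H_K^*(D)$. Your remarks that the presentation of $H_K^*(\widetilde B)$ is valid over $K$ (by applying the integral projective bundle formula twice) and that no division by $2$ is needed spell out what the paper leaves implicit; the final rank count is a harmless cross-check that the argument does not use.
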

\begin{proof}
In \eqref{eq:diagonalrings}, $\delta^*$ fixes $K_T$ and sends both
$x$ and $y$ to $\xi$. The quotient by $(x-y)$ is exactly the first ring.
\end{proof}

\section{Restriction surjectivity and cohomology}\label{sec:purity}
The map
$\widetilde f:\widetilde B\longrightarrow G\times G$ forgets $W$.
For the coefficient ring $K$ fixed above, define the graded algebras
\begin{equation*}
 C=H_K^*(\widetilde B),\quad
 A=\operatorname{im}\bigl(\widetilde f^*:H_K^*(G\times G)
 \longrightarrow C\bigr).
\end{equation*}
Let $\sigma$ denote the involution induced by interchange of the two factors.
For a $\sigma$-stable module $M$, we write $M^+=\{v\in M:\sigma v=v\}$ and $M^-=\{v\in M:\sigma v=-v\}$.
The integral Chern class generation recalled above gives
$A=K[a_1,\ldots,a_k,b_1,\ldots,b_k]\subset C$,
subject to its actual relations. We do not regard these classes as algebraically independent.

\begin{lemma}\label{lem:anti}
One has $C=A[c]$ and $C^-=A^-$.
\end{lemma}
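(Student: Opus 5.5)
We prove the two assertions in turn: generation by a direct Whitney-formula computation, and the anti-invariant statement by restricting to the diagonal and then using a stability property of the ideal generated by the differences $d_j$. Throughout, $2$ is invertible in $K$ (either $\mathbb{Q}$ or $\Lambda$), so every $\sigma$-stable $K$-module splits as $M=M^+\oplus M^-$ via $v=\tfrac12(v+\sigma v)+\tfrac12(v-\sigma v)$.

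\emph{Generation.} By \eqref{eq:diagonalrings}, $C$ is generated as a $K$-algebra by $K_T=K[e_1,\ldots,e_{k+1}]$ together with $x$ and $y$. By \eqref{eq:whitney},
\begin{equation*}
 x=c-a_1,\quad y=c-b_1,\quad e_j=a_j+(c-a_1)a_{j-1}.
\end{equation*}
Hence all of these generators lie in $A[c]$, and $C=A[c]$. Note that $\sigma$ exchanges $a_j$ and $b_j$, exchanges $x$ and $y$, and fixes $c$ and every $e_j$.

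\emph{Reduction to the diagonal.} Let $v\in C^-$. Since $\sigma\circ\delta=\delta$, we get $\delta^*v=\delta^*\sigma v=-\delta^*v$, and invertibility of $2$ gives $\delta^*v=0$. By Lemma~\ref{lem:diagonal}, $v=(x-y)w$ for some $w\in C$. Here $x-y=b_1-a_1=-d_1$, so $v=-d_1w$. Since $d_1$ is anti-invariant, taking anti-invariant parts of $v=-d_1w$ gives $v=-d_1w^+$, where $w^+\in C^+$. By the generation step, $w^+$ is a polynomial in $c$ with coefficients in $A$. It therefore suffices to show that $d_1c^i\in A$ for all $i\geq0$.

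\emph{The ideal $I$ and the key identity.} Let $I\subset A$ be the ideal generated by $d_1,\ldots,d_k$, and set $d_{k+1}=0$. Subtracting the two Whitney expressions for $e_j$ in \eqref{eq:whitney} gives, for $2\leq j\leq k+1$,
\begin{equation*}
 d_j+(c-a_1)a_{j-1}-(c-b_1)b_{j-1}=0 .
\end{equation*}
Rewriting the terms not involving $c$,
\begin{equation*}
 a_1a_{j-1}-b_1b_{j-1}=a_1d_{j-1}+b_{j-1}d_1 ,
\end{equation*}
so that
\begin{equation*}
 c\,d_{j-1}=a_1d_{j-1}+b_{j-1}d_1-d_j\in I .
\end{equation*}
Thus $cI\subset I$, and by induction $c^iI\subset I\subset A$. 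In particular $d_1c^i\in A$ for all $i$. Therefore $v\in A\cap C^-=A^-$. The reverse inclusion $A^-\subset C^-$ is clear, so $C^-=A^-$.

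The main obstacle is finding this closure property $cI\subset I$. Knowing only that $v$ is divisible by $d_1$ is not enough: one must control $d_1$ times arbitrary powers of $c$. The Whitney relations for all $j$, including the top case $j=k+1$ where $d_{k+1}=0$, are exactly what is needed to express $c\,d_{j-1}$ back inside $I$.
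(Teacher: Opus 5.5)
Your proof is correct, but it reaches $C^-=A^-$ by a different route from the paper. The paper stays inside the image algebra $A$. It lifts to the polynomial ring $K[A_1,\ldots,A_k,B_1,\ldots,B_k]$, changes variables to $s_j=A_j+B_j$, $t_j=A_j-B_j$, and uses exactness of invariants to obtain the module description $A^-=\sum_j A^+d_j$. The identity $cd_j=a_1a_j-b_1b_j-d_{j+1}$ then gives $cA^-\subset A^-$. Finally, an element $\sum_m\alpha_mc^m\in C^-$ is handled by antisymmetrizing its coefficients $\alpha_m$.

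You never describe $A^-$ as an $A^+$-module. Instead you use Lemma~\ref{lem:diagonal}: every anti-invariant class vanishes on the diagonal and is therefore divisible by $x-y=-d_1$. After that you only need $cI\subset I$ for the ideal $I=(d_1,\ldots,d_k)A$, which is the same Whitney identity rewritten.

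Your version gives a slightly stronger structural conclusion. $I$ is an ideal of $C=A[c]$ contained in $A$, so $d_1C\subset A$ and $C^-=d_1C^+$. This makes Lemma~\ref{lem:kernel} immediate, since $(x-y)\beta\in d_1C\subset A$ without the antisymmetrization step. For the same reason, your passage from $w$ to $w^+$ is unnecessary. The paper's argument, by contrast, handles the anti-invariant part using only $C=A[c]$, the Whitney identity, and the elementary invariant theory of the swap, with no appeal to the kernel of $\delta^*$.
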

\begin{proof}
The ring $C$ is generated by $e_1,\ldots,e_{k+1},x,y$.
Equation~\eqref{eq:whitney} expresses all these generators in $A[c]$.
In the polynomial ring
$K[A_1,\ldots,A_k,B_1,\ldots,B_k]$, consider the linear change of
variables $s_j=A_j+B_j$, $t_j=A_j-B_j$. It is invertible because
$2$ is a unit in $K$. An anti-invariant monomial has odd total $t$-degree
and is of the form $t_j$ times an invariant monomial. Taking anti-invariants
under the equivariant surjection onto $A$ yields
\begin{equation*}
 A^-=\sum_{j=1}^{k}A^+d_j.
\end{equation*}
Comparing the two expressions for $e_{j+1}$ gives
\begin{equation*}
 cd_j=a_1a_j-b_1b_j-d_{j+1}\quad(1\leq j\leq k),\quad d_{k+1}=0.
\end{equation*}
Thus $cA^-\subset A^-$.
Since $c$ is invariant, any expression $\omega=\sum_m\alpha_mc^m\in C^-$, with
$\alpha_m\in A$, satisfies
\begin{equation*}
 \omega=\sum_m\frac{\alpha_m-\sigma(\alpha_m)}{2}c^m\in A^-.
\end{equation*}
The reverse inclusion is immediate. All identities hold in the image algebra $A$, without any assumption
of algebraic independence.
\end{proof}

\Needspace{6\baselineskip}
\begin{lemma}\label{lem:kernel}
One has
$\ker(\iota^*:H_K^*(B)\longrightarrow H_K^*(D))
\subset\operatorname{im}(f^*:H_K^*(R)\longrightarrow H_K^*(B))$.
\end{lemma}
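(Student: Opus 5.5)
The plan is to identify everything inside $C=H_K^*(\widetilde B)$ by the finite quotient comparison \eqref{eq:quotient}, and then to show that the kernel lands in $A^+$ using Lemma~\ref{lem:diagonal} and Lemma~\ref{lem:anti}.

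\emph{Setting up the identifications.} The quotient map $q_B$ induces, by \eqref{eq:quotient} with $\Gamma=\mathfrak{S}_2$ and $2$ invertible in $K$, an isomorphism $q_B^*:H_K^*(B)\xrightarrow{\cong}C^+$. Since $\iota=q_B\circ\delta$, under this isomorphism $\iota^*$ becomes the restriction of $\delta^*$ to $C^+$. Therefore $\ker\iota^*$ corresponds to $\ker\delta^*\cap C^+$. By Lemma~\ref{lem:diagonal}, this is $(x-y)C\cap C^+$.

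\emph{Describing the invariant part of the kernel.} By \eqref{eq:whitney} we have $x-y=b_1-a_1=-d_1$, which is anti-invariant. Suppose $(x-y)\omega$ is invariant, with $\omega\in C$. Then
\[
(x-y)\omega=\sigma\bigl((x-y)\omega\bigr)=-(x-y)\sigma(\omega).
\]
Hence
\[
(x-y)\omega=(x-y)\cdot\tfrac{\omega-\sigma(\omega)}{2},
\qquad \tfrac{\omega-\sigma(\omega)}{2}\in C^-.
\]
This gives $(x-y)C\cap C^+=d_1C^-$. By Lemma~\ref{lem:anti}, $C^-=A^-$, so $\ker\delta^*\cap C^+=d_1A^-$. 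Because $d_1\in A$ is anti-invariant, $d_1A^-\subset A\cap C^+=A^+$.

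\emph{Identifying the image of $f^*$.} Again by \eqref{eq:quotient}, $H_K^*(R)\cong H_K^*(G\times G)^\sigma$. Since $f\circ q_B$ equals $\widetilde f$ followed by the quotient $G\times G\to R$, the map $f^*$ corresponds, under $q_B^*$, to the restriction of $\widetilde f^*$ to invariants. The map $\widetilde f^*$ is $\sigma$-equivariant, so the exactness-of-invariants remark gives
\[
\widetilde f^*\bigl(H_K^*(G\times G)^\sigma\bigr)=(\operatorname{im}\widetilde f^*)^\sigma=A^+.
\]
Thus $q_B^*\bigl(\operatorname{im}f^*\bigr)=A^+$, and combining with the previous step gives
\[
q_B^*(\ker\iota^*)=d_1A^-\subset A^+=q_B^*(\operatorname{im}f^*).
\]
Injectivity of $q_B^*$ then yields the lemma.

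\emph{Where the work lies.} The only substantive input is the equality $C^-=A^-$ from Lemma~\ref{lem:anti}. The points needing care are bookkeeping: checking that the identifications via $q_B^*$ are compatible with $f$ and $\widetilde f$, and that the involution on $G\times G$ matches the one on $\widetilde B$.
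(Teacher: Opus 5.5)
Your proposal is correct and follows the paper's proof essentially step for step: you identify $H_K^*(B)$ with $C^+$ and $\operatorname{im}f^*$ with $A^+$ via \eqref{eq:quotient} and exactness of invariants, write a kernel element as $(x-y)\beta$ by Lemma~\ref{lem:diagonal}, replace $\beta$ by its anti-invariant part without cancelling $x-y$, and conclude from $C^-=A^-$ (Lemma~\ref{lem:anti}) together with $x-y=-d_1\in A^-$. Your extra bookkeeping spells out what the paper compresses into the phrase ``by equivariance and exactness of invariants''; it consists of the equality $(x-y)C\cap C^+=d_1C^-$ and the compatibility of $f\circ q_B$ with $\widetilde f$ followed by the quotient $G\times G\to R$.
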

\begin{proof}
The quotient comparison \eqref{eq:quotient} identifies $H_K^*(B)$
with $C^+$ and $\operatorname{im}f^*$ with $A^+$, by equivariance
and exactness of invariants.
For $\alpha\in C^+$ vanishing on the diagonal,
Lemma~\ref{lem:diagonal} gives $\alpha=(x-y)\beta$. Hence
\begin{equation*}
 \alpha=(x-y)\frac{\beta-\sigma(\beta)}{2}.
\end{equation*}
Both factors belong to $A^-$ by Lemma~\ref{lem:anti}.
Since $x-y=b_1-a_1$,  their product lies in $A^+$.
No cancellation by $x-y$ is used; this class may be a zero divisor.
\end{proof}

\begin{proposition}\label{prop:surjectivity}
For every $m\geq0$, the map
$j^*-f^*:H_K^m(Y)\oplus H_K^m(R)\longrightarrow H_K^m(B)$ is
surjective.
\end{proposition}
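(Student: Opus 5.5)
Given $\beta\in H_K^m(B)$, we will write it as $j^*\eta - f^*\rho$ for some $\eta\in H_K^m(Y)$ and $\rho\in H_K^m(R)$. Lemma~\ref{lem:kernel} reduces this to showing that $\iota^*\beta$ lies in the image of $(j\circ\iota)^*$. Indeed, suppose we find $\eta\in H_K^m(Y)$ with
\begin{equation*}
(j\iota)^*\eta=\iota^*\beta .
\end{equation*}
Then $\beta-j^*\eta$ lies in $\ker\iota^*$. By Lemma~\ref{lem:kernel} it equals $f^*\rho'$ for some $\rho'\in H_K^m(R)$. Setting $\rho=-\rho'$ gives $\beta=j^*\eta-f^*\rho$, which is the required surjectivity.

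It therefore suffices to prove that $(j\circ\iota)^*:H_K^*(Y)\to H_K^*(D)$ is surjective. Geometrically, $j\circ\iota:D\to Y$ sends a flag $E\subset W$, with $E=\ker\ell$ in $W$, to the double hyperplane $[\ell^2]$. This is a morphism of projective bundles over $T$: it is induced by the squaring map $\mathbb{P}(S^\vee)\to\mathbb{P}(\operatorname{Sym}^2 S^\vee)$, a Veronese embedding in each fiber.

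By \eqref{eq:diagonalrings}, $H_K^*(D)$ is generated over $K_T$ by $\xi$. Since both $D$ and $Y$ lie over $T$ and $j\iota$ commutes with the projections, $K_T$ is in the image. It remains to hit $\xi$. The tautological line of $Y$ pulls back under the Veronese map to $\mathcal{O}_D(-1)^{\otimes 2}$, with the appropriate sign conventions for $\mathcal{O}(1)$ on $\mathbb{P}(E)$. Hence
\begin{equation*}
(j\iota)^*\zeta_Y=2\xi .
\end{equation*}
Because $2$ is invertible in $K$, $\xi=(j\iota)^*(\zeta_Y/2)$. Since $(j\iota)^*$ is a ring homomorphism, it is surjective.

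The main obstacle is justifying this step carefully. First, $j$ is only a homeomorphism onto the rank-at-most-two locus, not an algebraic isomorphism. Still, $j\circ\iota$ is the algebraic Veronese-type morphism $[\ell]\mapsto[\ell^2]$, and $\iota^*j^*=(j\iota)^*$ holds on cohomology regardless. Second, the factor of $2$ in the pullback of the tautological class must be verified fiberwise. On a fiber $\mathbb{P}^k\to\mathbb{P}^{N}$, the quadratic Veronese map pulls back the hyperplane class to twice the hyperplane class. Since $H^2$ of the projective bundle is $H^2(T)\oplus K\xi$, the global identity holds modulo classes from $T$, and those are harmless because they are already in the image. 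This is also the point where inverting $2$ is essential.
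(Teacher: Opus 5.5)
Your proposal is correct and follows the paper's proof essentially verbatim: you reduce via Lemma~\ref{lem:kernel} to surjectivity of $(j\circ\iota)^*$, which holds because this pullback is a ring map over $T$ sending the hyperplane class to $2\xi$, with $2$ invertible in $K$. Your fiberwise check, giving the identity modulo classes from $T$, is a harmless extra safeguard; the paper obtains $(j\circ\iota)^*h=2\xi$ exactly, from the line bundle isomorphism $\widetilde j^{\,*}\mathcal{O}_Y(-1)\cong\operatorname{pr}_1^*\mathcal{O}_D(-1)\otimes\operatorname{pr}_2^*\mathcal{O}_D(-1)$.
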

\begin{proof}
Let $h=c_1(\mathcal{O}_Y(1))$ and $\widetilde j=j\circ q_B$.
Multiplication of the tautological lines gives
\begin{equation*}
 \widetilde j^{\,*}\mathcal{O}_Y(-1)
 \cong\operatorname{pr}_1^*\mathcal{O}_D(-1)
 \otimes\operatorname{pr}_2^*\mathcal{O}_D(-1).
\end{equation*}
Consequently
\begin{equation}\label{eq:double}
 \widetilde j^{\,*}h=x+y,\quad (j\circ\iota)^*h=2\xi.
\end{equation}
The map $j\circ\iota$ lies over the identity of $T$. The image of
$(j\circ\iota)^*$ therefore contains $K_T$ and $\xi$, since $2$
is invertible. These classes generate $H_K^*(D)$, so
$(j\circ\iota)^*$ is surjective.
Given $\gamma\in H_K^m(B)$, choose $\eta\in H_K^m(Y)$ with
$(j\circ\iota)^*\eta=\iota^*\gamma$.
Lemma~\ref{lem:kernel} gives $\rho\in H_K^m(R)$ such that
$\gamma-j^*\eta=f^*\rho$. Thus
$\gamma=(j^*-f^*)(\eta,-\rho)$.
\end{proof}

\begin{proof}[Proof of Theorem~\ref{thm:purity}]
Take $K=\mathbb{Q}$. Since the cohomology of $Y,R,B$ is even,
\eqref{eq:mv} and Proposition~\ref{prop:surjectivity} imply
$H^{2s+1}(X)=0$ and an injection
\begin{equation*}
 H^{2s}(X)\longrightarrow H^{2s}(Y)\oplus H^{2s}(R).
\end{equation*}
The target is pure Tate: this follows from the algebraic Schubert
classes, the projective bundle formula, and \eqref{eq:quotient}.
The injection is induced by algebraic pullbacks and is strict for
the Hodge and weight filtrations, so its source is pure of type
$(s,s)$ as well.
For $p=0$, use $X_{0,n}=\operatorname{SP}^{2}(\mathbb{P}^{n})$
and \eqref{eq:quotient}; for $p=n$, the variety is the point
$2[\mathbb{P}^{n}]$.
\end{proof}

\begin{corollary}
For $1\leq p<n$ and $K\in\{\mathbb{Q},\Lambda\}$, pullback gives
\begin{equation}\label{eq:fiberproduct}
 H_K^*(X_{p,n})\cong H_K^*(Y)\times_{H_K^*(B)}H_K^*(R).
\end{equation}
The multiplication is componentwise. In degree $2s$ this is the kernel
in the short exact sequence
\begin{equation}\label{eq:short}
 0\longrightarrow H_K^{2s}(X_{p,n})
 \xrightarrow{(g^*,r^*)}H_K^{2s}(Y)\oplus H_K^{2s}(R)
 \xrightarrow{j^*-f^*}H_K^{2s}(B)\longrightarrow0.
\end{equation}
For $K=\mathbb{Q}$, this is an exact sequence of mixed Hodge structures.
More explicitly, with $K_T=H_K^*(T)$ and $\Theta$ as above,
\begin{equation}\label{eq:presentations}
\begin{aligned}
 H_K^*(Y)&=K_T[h]/(\Psi(h)),&
 \Psi(h)&=\sum_{j=0}^{m_k+1}c_j(\operatorname{Sym}^{2}(S^\vee))h^{m_k+1-j},\\
 H_K^*(R)&=\bigl(H_K^*(G)\otimes_KH_K^*(G)\bigr)^{\mathfrak{S}_2},\\
 H_K^*(B)&=\bigl(K_T[x,y]/(\Theta(x),\Theta(y))\bigr)^{\mathfrak{S}_2}.
\end{aligned}
\end{equation}
The last involution fixes $K_T$ and interchanges $x,y$.
The map $j^*$ fixes $K_T$ and sends $h$ to $x+y$.
The map $f^*$ is computed on invariant tautological Chern polynomials
using $a_j=e_j-xa_{j-1}$, $b_j=e_j-yb_{j-1}$.
\end{corollary}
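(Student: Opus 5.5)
The plan is to read the corollary off the Mayer--Vietoris sequence \eqref{eq:mv}, using the evenness of the cohomology of $Y$, $R$, $B$ over $K$ together with Proposition~\ref{prop:surjectivity}.

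\textbf{Step 1: the short exact sequences.} Since $H_K^{\mathrm{odd}}(Y)=H_K^{\mathrm{odd}}(R)=H_K^{\mathrm{odd}}(B)=0$ for $K\in\{\mathbb{Q},\Lambda\}$, the sequence \eqref{eq:mv} breaks into pieces
\begin{equation*}
 H_K^{2s-1}(B)=0\longrightarrow H_K^{2s}(X)\longrightarrow H_K^{2s}(Y)\oplus H_K^{2s}(R)\xrightarrow{j^*-f^*}H_K^{2s}(B)\longrightarrow H_K^{2s+1}(X)\longrightarrow 0.
\end{equation*}
Proposition~\ref{prop:surjectivity} makes $j^*-f^*$ surjective, so $H_K^{2s+1}(X)=0$ and \eqref{eq:short} is exact. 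The kernel of $j^*-f^*$ is exactly the set of pairs $(\eta,\rho)$ with $j^*\eta=f^*\rho$. This is the fiber product in degree $2s$.

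\textbf{Step 2: ring structure and Hodge compatibility.} The map $(g^*,r^*)$ is a ring homomorphism into the product ring $H_K^*(Y)\times H_K^*(R)$. Since $g\circ j=r\circ f$, its image lies in the fiber product, which is a subring with componentwise multiplication. Injectivity and the identification of the image then give \eqref{eq:fiberproduct} as an isomorphism of rings. For $K=\mathbb{Q}$, every map in \eqref{eq:short} is induced by algebraic morphisms, so each is a morphism of mixed Hodge structures. The kernel of such a morphism carries the induced structure, by Deligne's strictness, and the Mayer--Vietoris connecting map is irrelevant because it vanishes. The one point I would check carefully is that the pushout Mayer--Vietoris sequence agrees with the mixed Hodge sequence. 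This can be bypassed: only the injection $(g^*,r^*)$ and the surjection $j^*-f^*$ are used, and both are algebraic.

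\textbf{Step 3: the explicit presentations.} I would obtain these one at a time.
\begin{itemize}
\item The formula for $H_K^*(Y)$ is the projective bundle formula \eqref{eq:pb} applied to the rank-$(m_k+1)$ bundle $\operatorname{Sym}^2(S^\vee)$ on $T$, with $h=\zeta$. Here $\operatorname{rank}\operatorname{Sym}^2(S^\vee)=\binom{k+2}{2}=m_k+1$.
\item The formula for $H_K^*(R)$ is \eqref{eq:quotient} applied to $G\times G$, combined with the Künneth formula, which holds over $K$ because $H^*(G;K)$ is free.
\item The formula for $H_K^*(B)$ is \eqref{eq:quotient} combined with \eqref{eq:diagonalrings}. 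The involution fixes $K_T$ because it lies over $T$, and it swaps $x$ and $y$.
\item The formula for $j^*$ is \eqref{eq:double}, namely $\widetilde j^{\,*}h=x+y$, restricted to invariants; it fixes $K_T$ because $j$ lies over $T$.
\item The recursion for $f^*$ is \eqref{eq:whitney}, using $\widetilde f^*c_j$ of the two tautological bundles, which equals $a_j,b_j$.
\end{itemize}

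I expect no real obstacle. The only delicate point is the mixed Hodge compatibility for $K=\mathbb{Q}$, and Step 2 handles it by avoiding the connecting homomorphism.
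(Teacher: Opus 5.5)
Your proposal is correct and follows the paper's proof. You use the Mayer--Vietoris sequence \eqref{eq:mv}, even cohomology of $Y,R,B$, and Proposition~\ref{prop:surjectivity} to obtain \eqref{eq:short} and the fiber product. You then derive the presentations from \eqref{eq:pb}, \eqref{eq:quotient} with K\"unneth, and \eqref{eq:diagonalrings}, with the restriction formulas taken from \eqref{eq:double} and \eqref{eq:whitney}. Your explicit strictness argument for the mixed Hodge statement spells out a point the paper leaves implicit.
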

\begin{proof}
The exact sequence \eqref{eq:mv}, the vanishing of odd cohomology,
and Proposition~\ref{prop:surjectivity} give \eqref{eq:short}.
The injective map $(g^*,r^*)$ is a ring map, and its image is the stated
fiber product. The presentations follow from \eqref{eq:pb},
\eqref{eq:quotient}, and \eqref{eq:diagonalrings}; the restriction
formulas are \eqref{eq:whitney} and \eqref{eq:double}.
In $\Psi$, the bundle is $\operatorname{Sym}^{2}(S^\vee)$ itself,
so no additional alternating sign is inserted.
\end{proof}

\begin{corollary}\label{cor:awaytwo}
For $0\leq p\leq n$ and $s\geq0$,
\begin{equation*}
\begin{gathered}
 H^{2s+1}(X_{p,n};\Lambda)=H_{2s+1}(X_{p,n};\Lambda)=0,\\
 H^{2s}(X_{p,n};\Lambda)\cong\Lambda^{b_{2s}(X_{p,n})},\quad
 H_{2s}(X_{p,n};\Lambda)\cong\Lambda^{b_{2s}(X_{p,n})}.
\end{gathered}
\end{equation*}
Integral homology and cohomology have no odd primary torsion.
For every field $F$ of characteristic different from $2$, odd homology
and cohomology vanish and the even dimensions are $b_{2s}(X_{p,n})$.
For $1\leq p<n$, \eqref{eq:fiberproduct}--\eqref{eq:presentations}
also hold over $F$.
\end{corollary}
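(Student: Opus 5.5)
The proof works with the Mayer--Vietoris sequence \eqref{eq:mv} over $K=\Lambda$, which was already set up in both coefficient cases. For $1\leq p<n$, the sequence \eqref{eq:mv} and Proposition~\ref{prop:surjectivity} are valid over $\Lambda$. The groups $H_\Lambda^*(Y)$, $H_\Lambda^*(R)$ and $H_\Lambda^*(B)$ are concentrated in even degrees and are finite free in each degree. For $R$ and $B$ this uses the remark that invariants are direct summands of free modules over the principal ideal domain $\Lambda$. Surjectivity of $j^*-f^*$ in every degree then kills all connecting maps. This gives $H^{2s+1}(X;\Lambda)=0$ and the short exact sequence \eqref{eq:short} over $\Lambda$. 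Since $H^{2s}_\Lambda(B)$ is free, the sequence splits. Hence $H^{2s}(X;\Lambda)$ is a direct summand of a finite free $\Lambda$-module, so it is projective and therefore free of finite rank. Tensoring the split sequence with $\mathbb{Q}$ and comparing with the rational sequence identifies the rank as $b_{2s}(X_{p,n})$. The cases $p=0$ and $p=n$ are handled separately. For $p=n$ the variety is a point. For $p=0$ we have $X_{0,n}=\operatorname{SP}^2(\mathbb{P}^n)$, and \eqref{eq:quotient} gives $H^*_\Lambda\cong(H^*_\Lambda(\mathbb{P}^n)^{\otimes2})^{\mathfrak S_2}$, which is even and free as a summand of a free module.

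Next we pass from cohomology to homology. $X$ is a finite CW complex, by the semialgebraic triangulation used in the pushout proposition, so its homology over $\Lambda$ is finitely generated in each degree. The universal coefficient theorem over the PID $\Lambda$ gives
\[
H^m(X;\Lambda)\cong\operatorname{Hom}(H_m,\Lambda)\oplus\operatorname{Ext}(H_{m-1},\Lambda).
\]
Since each $H^m$ is free, no $\operatorname{Ext}$ term can contribute. This forces every $H_{m-1}$ to be torsion-free, hence free. The free ranks of $H_m$ then match those of $H^m$, so $H_{\mathrm{odd}}=0$ and $H_{2s}\cong\Lambda^{b_{2s}}$. Because $\Lambda$ is flat over $\mathbb{Z}$, we have $H_*(X;\Lambda)=H_*(X;\mathbb{Z})\otimes\Lambda$. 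So the integral homology has no odd torsion, and by universal coefficients the same holds for integral cohomology, whose torsion is a shifted copy of the homology torsion.

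For a field $F$ of characteristic $\neq2$, $F$ is a $\Lambda$-algebra. Applying universal coefficients from the free complex-level $\Lambda$-homology computed above gives $H_*(X;F)=H_*(X;\Lambda)\otimes_\Lambda F$ with no Tor terms, and dually for cohomology. This yields the stated dimensions and the vanishing in odd degrees. For the fiber product presentation over $F$, note that every step of Lemmas~\ref{lem:diagonal}--\ref{lem:kernel} and Proposition~\ref{prop:surjectivity} used only that $2$ is invertible in $K$. This includes \eqref{eq:quotient} for $\mathfrak S_2$, the change of variables $s_j,t_j$, and the division by $2$. The Grassmannian and projective bundle facts hold over any ring. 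So the whole argument runs verbatim with $K=F$. Alternatively, one can tensor the split sequence \eqref{eq:short} over $\Lambda$ with $F$.

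The main obstacle is the bookkeeping for $R$ and $B$ over $\Lambda$. We must make sure that \eqref{eq:quotient} gives honest freeness of the invariant modules and that the split exactness survives base change. Both follow once the invariants are recognized as direct summands, via the averaging idempotent $(1+\sigma)/2$.
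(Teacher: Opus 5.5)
Your proposal is correct and follows the paper's proof essentially step for step. You use the Mayer--Vietoris sequence over $\Lambda$ together with Proposition~\ref{prop:surjectivity} to get odd vanishing and \eqref{eq:short}, detect the rank over $\mathbb{Q}$, rule out torsion in homology by the universal coefficient theorem, and treat fields of characteristic $\neq 2$ by extension of scalars (or by rerunning the argument, which only needs $2$ invertible); the only cosmetic difference is that you get freeness of $H^{2s}(X;\Lambda)$ from the splitting of \eqref{eq:short}, whereas the paper uses that it is a submodule of a finite free module over a PID.
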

\begin{proof}
Take $K=\Lambda$ in the preceding argument. The even cohomology of
$X_{p,n}$ is a submodule of a finite free $\Lambda$-module by
\eqref{eq:short}, hence is finite free; its rank is obtained by
extension to $\mathbb{Q}$. Odd cohomology vanishes.
The boundary cases follow from the quotient comparison and the point
case as above.
Since $X_{p,n}$ has finite CW type, the universal coefficient theorem
over the principal ideal domain $\Lambda$ applies to finitely
generated homology groups. Torsion in $H_m(X_{p,n};\Lambda)$ would
contribute a nonzero torsion submodule
$\operatorname{Ext}^1_\Lambda(H_m(X_{p,n};\Lambda),\Lambda)$ to
$H^{m+1}(X_{p,n};\Lambda)$, which is free. Thus homology is free with
the asserted ranks. Localization and change of coefficients give
the remaining group statements.
The ring statements follow by extension of scalars in
\eqref{eq:short}, which splits as a sequence of modules because its
rightmost term is free. Invariants commute with this change of
coefficients because $2$ is invertible.
\end{proof}

Let $A_{\mathrm{PL}}(Z)$ denote Sullivan's commutative differential
graded algebra of rational polynomial forms. For such algebras,
$\simeq$ denotes a zigzag of quasi-isomorphisms.

\begin{corollary}
Each $X_{p,n}$, $0\leq p\leq n$, is rationally formal:
\begin{equation*}
 A_{\mathrm{PL}}(X_{p,n})\simeq(H^*(X_{p,n}),0).
\end{equation*}
Every algebraic morphism between two such varieties is formal, and
every defined rational Massey product of length at least three
contains zero. The rational cohomology ring determines the rational
homotopy type.
\end{corollary}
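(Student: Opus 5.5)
The plan is to deduce formality from the purity of Theorem~\ref{thm:purity}, using the purity criterion of Cirici and Horel \cite[Thm.~8.1 and Prop.~8.9(1)]{CH20}. That criterion says the following. Let $Z$ be a complex algebraic variety such that, for every $m$, the mixed Hodge structure on $H^m(Z;\mathbb{Q})$ is pure of weight $\alpha m$ for a fixed rational $\alpha$. Then $A_{\mathrm{PL}}(Z)\otimes\mathbb{Q}$ is formal, and any algebraic morphism between two such varieties with the same weight slope is formal. By Theorem~\ref{thm:purity}, $H^{2s}(X_{p,n})$ is a sum of copies of $\mathbb{Q}(-s)$, so it has weight $2s=m$, and the odd groups vanish. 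Hence every $X_{p,n}$ is pure of slope $\alpha=1$. This holds in all cases $0\le p\le n$: the boundary cases $p=0$ and $p=n$ were included in Theorem~\ref{thm:purity}, namely $\operatorname{SP}^2(\mathbb{P}^n)$ and the point.

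First, I check that the hypotheses of \cite{CH20} fit our setting. The $X_{p,n}$ are projective but possibly singular and reducible varieties with their reduced structures. Cirici--Horel work with arbitrary complex varieties using Deligne's mixed Hodge theory, so no smoothness is needed. One point needs care: their purity condition is phrased for the weight filtration on rational cohomology, and our Theorem~\ref{thm:purity} gives exactly this. The weight $2s$ equals the degree $2s$ for all $s$, and the condition is vacuous in odd degrees. Since $\mathbb{Q}$ is the coefficient field, descent from $\mathbb{C}$ to $\mathbb{Q}$ is part of their statement. I therefore obtain $A_{\mathrm{PL}}(X_{p,n})\simeq(H^*(X_{p,n}),0)$.

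Second, for a morphism $\phi:X_{p,n}\to X_{p',n'}$, both source and target are pure of slope $1$. The functorial version, Prop.~8.9(1), then gives formality of $\phi$, meaning the square of quasi-isomorphisms commutes up to homotopy with $\phi^*$ on cohomology.

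The remaining claims are standard consequences of formality. If $A_{\mathrm{PL}}(Z)$ is quasi-isomorphic to a CDGA with zero differential, then every defined Massey product $\langle x_1,\ldots,x_r\rangle$ with $r\ge3$ contains zero. This holds because Massey product sets are invariant under quasi-isomorphism, and in $(H,0)$ one may choose all defining systems to be zero. Moreover, the $X_{p,n}$ are connected; simple connectivity can be read off from the pushout and the simply connected pieces, but it is not needed here since Sullivan theory applies to nilpotent finite-type spaces. Formality then means the minimal model is the minimal model of $(H^*,0)$, so the rational cohomology ring determines the rational homotopy type. For the last sentence I expect to need simple connectivity or nilpotence of $X_{p,n}$. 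I would obtain it from van Kampen on the pushout square: $Y$ and $R$ are simply connected (a projective bundle over a Grassmannian, and a symmetric square of a simply connected space), and $B$ is connected, so $X$ is simply connected.

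The main obstacle is the precise matching of the hypotheses of the cited purity theorem to singular, reducible projective varieties, together with the morphism-level statement. Everything else is formal bookkeeping.
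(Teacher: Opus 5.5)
Your proposal is correct and follows essentially the same route as the paper: you read off purity of weight $m$ in degree $m$ from Theorem~\ref{thm:purity}, apply \cite[Thm.~8.1 and Prop.~8.9(1)]{CH20} to get formality of the varieties and of algebraic morphisms, and treat the Massey product and rational homotopy type statements as standard consequences of formality. The one difference is that you derive simple connectivity yourself, by van Kampen on the homotopy pushout square using that $Y$ and $R$ are simply connected and $B$ is connected, where the paper cites \cite[Cor.~3.4, P.~2371]{Hu10}; your argument is a valid substitute, with $p=0$ covered by the same symmetric-square argument you give for $R$ and $p=n$ trivial.
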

\begin{proof}
Theorem~\ref{thm:purity} is precisely purity of weight $m$ in degree
$m$. Apply \cite[Thm.~8.1 and Prop.~8.9(1)]{CH20}.
The Massey product assertion is the standard consequence of formality;
the last assertion uses simple connectedness
\cite[Cor.~3.4, P.~2371]{Hu10} and the Sullivan correspondence
\cite[Cor.~1.26]{Hess}. The relevant ring is
\eqref{eq:fiberproduct} for $1\leq p<n$, the invariant ring of
$(\mathbb{P}^{n})^2$ for $p=0$, and $\mathbb{Q}$ for $p=n$.
\end{proof}

\section{Poincar\'e polynomials and stable series}\label{sec:series}
\subsection{The finite formula}
For a complex variety $Z$, let $W_\bullet$ and $F^\bullet$ denote the
weight and Hodge filtrations on compactly supported cohomology. Our
conventions are
\begin{equation*}
\begin{aligned}
 E(Z;u,v)&=\sum_{i,a,b}(-1)^i
 \dim_{\mathbb{C}}\operatorname{Gr}_F^a
 \operatorname{Gr}_{a+b}^W H_c^i(Z;\mathbb{C})u^av^b,\\
 \beta_j^{\mathrm{vir}}(Z)&=\sum_i(-1)^{i+j}
 \dim_{\mathbb{Q}}\operatorname{Gr}_j^W H_c^i(Z;\mathbb{Q})
 =(-1)^j[t^j]E(Z;t,t).
\end{aligned}
\end{equation*}
Here $\operatorname{Gr}$ denotes the corresponding associated graded
space. Thus $P_Z^{\mathrm{vir}}(t)=\sum_j(-1)^j
\beta_j^{\mathrm{vir}}(Z)t^j$, as in \cite[\S3.1]{CLN}.
The alternative convention $E(Z;-t,-t)$ agrees with ours for the
even polynomials considered here.

\begin{proof}[Proof of Theorem~\ref{thm:finite}]
For $p<n$, the virtual formula
$P_{X_{p,n}}^{\mathrm{vir}}(t)=F_{p,n}(t^2)$ is
\cite[\S4.5, Ex.~4.2]{Hu21}, expressed in Gaussian polynomial
notation. By projectivity and Theorem~\ref{thm:purity},
\begin{equation*}
 E(X_{p,n};u,v)=\sum_{s\geq0}b_{2s}(X_{p,n})(uv)^s.
\end{equation*}
Specializing $u=v=t$ identifies each coefficient with
$[q^s]F_{p,n}(q)$, proving all the assertions.
For $p=n$, $X_{n,n}$ is a point.
\end{proof}

Let $g_{r,N}(s)$ count partitions of $s$ into at most $r$ parts,
each at most $N-r$, with $g_{r,N}(s)=0$ for $s<0$.
Then $G_{r,N}(q)=\sum_{s\geq0}g_{r,N}(s)q^s$.
For a condition $\mathcal{E}$, we write $\mathbf{1}_{\mathcal{E}}$ for
its indicator. For $k\geq1$, $j\geq0$, define
\begin{equation*}
\begin{split}
 \mu_k(j)&=\#\{(a,b):0\leq a\leq b\leq k,\ a+b=j\}\\
 &=\max\{0,\min(k,\lfloor j/2\rfloor)-\max(0,j-k)+1\}.
\end{split}
\end{equation*}

\begin{corollary}
For $0\leq p<n$ and $s\geq0$,
\begin{equation*}
\begin{split}
 b_{2s}(X_{p,n})={}&\frac{1}{2}\left[
 \sum_{a=0}^{s}g_{k,N}(a)g_{k,N}(s-a)
 +\mathbf{1}_{2\mid s}g_{k,N}(s/2)\right]\\
 &+\sum_{j=0}^{s}g_{k+1,N}(s-j)
 \bigl(\mathbf{1}_{j\leq m_k}-\mu_k(j)\bigr).
\end{split}
\end{equation*}
The trace term is zero when $s$ is odd. The same expression equals
$\beta_{2s}^{\mathrm{vir}}(X_{p,n})$ and the degree-$2s$ Betti number
with coefficients in any field of characteristic different from $2$.
\end{corollary}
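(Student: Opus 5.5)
The plan is to extract coefficients from the identity $P_{X_{p,n}}(t)=F_{p,n}(t^2)$ of Theorem~\ref{thm:finite}. We then transport the result to other coefficients using Corollary~\ref{cor:awaytwo}. Thus $b_{2s}(X_{p,n})=[q^s]F_{p,n}(q)$, and it suffices to compute this coefficient term by term in \eqref{eq:finite}, using $G_{r,N}(q)=\sum_s g_{r,N}(s)q^s$.

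First, the coefficient of $q^s$ in $G_{k,N}(q)^2$ is the Cauchy convolution $\sum_{a=0}^s g_{k,N}(a)g_{k,N}(s-a)$. In $G_{k,N}(q^2)=\sum_a g_{k,N}(a)q^{2a}$, the coefficient of $q^s$ is $g_{k,N}(s/2)$ when $s$ is even and $0$ otherwise. This gives the first bracket, including the remark that the trace term vanishes for odd $s$.

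Second, write $\Pi_{m_k}(q)=\sum_{j\ge0}\mathbf{1}_{j\le m_k}q^j$. Next note that $(\Pi_k(q)^2+\Pi_k(q^2))/2$ is the generating function of unordered pairs $\{a,b\}$ with $0\le a,b\le k$, graded by $a+b$. Indeed, $\Pi_k(q)^2$ counts ordered pairs, and $\Pi_k(q^2)$ counts diagonal pairs, so the average counts pairs with $a\le b$ (Burnside for $\mathfrak S_2$). Hence its $q^j$ coefficient is $\mu_k(j)$. For the closed form of $\mu_k(j)$, write $b=j-a$. The constraints become $\max(0,j-k)\le a\le\min(k,\lfloor j/2\rfloor)$; the upper bound $\min(k,\cdot)$ is redundant but harmless. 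Taking the positive part of the count gives the stated expression. Multiplying by $G_{k+1,N}(q)$ and taking the $q^s$ coefficient yields $\sum_{j=0}^s g_{k+1,N}(s-j)(\mathbf{1}_{j\le m_k}-\mu_k(j))$.

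Finally, Theorem~\ref{thm:finite} gives $P^{\mathrm{vir}}=F_{p,n}(t^2)$. So $\beta^{\mathrm{vir}}_{2s}=(-1)^{2s}[t^{2s}]E(X;t,t)=[q^s]F_{p,n}$ by the convention in Section~\ref{sec:series}. For a field $F$ with $\operatorname{char}F\ne2$, Corollary~\ref{cor:awaytwo} states directly that the even dimensions equal $b_{2s}$. The only mildly delicate step is the Burnside interpretation and the boundary bookkeeping in $\mu_k$. Everything else is routine coefficient extraction, so I expect no real obstacle.
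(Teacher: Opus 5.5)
Your proposal is correct and follows the paper's proof. You extract the $q^s$ coefficient from \eqref{eq:finite} using $[q^j]\tfrac12(\Pi_k(q)^2+\Pi_k(q^2))=\mu_k(j)$, then invoke Theorem~\ref{thm:finite} for the virtual Betti numbers and Corollary~\ref{cor:awaytwo} for fields of characteristic different from $2$; your Burnside justification of the $\mu_k$ identity and the boundary check on $\mu_k$ simply spell out what the paper states in one line.
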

\begin{proof}
The coefficient of $q^j$ in
$(\Pi_k(q)^2+\Pi_k(q^2))/2$ is $\mu_k(j)$.
Extract coefficients in \eqref{eq:finite} and apply
Corollary~\ref{cor:awaytwo}.
\end{proof}

\subsection{Limits and nonrationality}
\begin{proof}[Proof of Theorem~\ref{thm:stable}]
The Gaussian product gives
\begin{equation}\label{eq:gausstail}
 G_{r,N}(q)=U_r(q)\prod_{j=1}^{r}(1-q^{N-r+j})
 \equiv U_r(q)\pmod{q^{N-r+1}}.
\end{equation}
Define
\begin{equation*}
 \Delta_k(q)=\Pi_{m_k}(q)-\frac{\Pi_k(q)^2+\Pi_k(q^2)}{2}.
\end{equation*}
Since $\Delta_k=O(q^2)$, \eqref{eq:gausstail} implies
$F_{p,n}\equiv F_p\pmod{q^{n-p+1}}$ and gives \eqref{eq:fixed}.
The limit is rational, and its coefficients are integers because
they stabilize from finite Betti numbers.
As $k\longrightarrow\infty$, one has
$U_k,U_{k+1}\longrightarrow U$ and
$\Pi_k,\Pi_{m_k}\longrightarrow(1-q)^{-1}$.
The correction tends to
\begin{equation*}
 \frac{1}{1-q}-\frac{1}{2}\left(\frac{1}{(1-q)^2}
 +\frac{1}{1-q^2}\right)
 =-\frac{q^2}{(1-q)(1-q^2)}.
\end{equation*}
This proves \eqref{eq:stable}.

The Euler product $U$ converges locally uniformly for $|q|<1$ and
defines a holomorphic function there. For $0<q<1$, let
$\varepsilon=1-q$ and
$\rho(q)=q^2/((1-q)(1-q^2))$.
For every positive integer $M$,
\begin{equation}\label{eq:growth}
 U(q)\geq\prod_{j=1}^{M}(1-q^j)^{-1}
 \geq\frac{1}{M!\varepsilon^M},
\end{equation}
since $1-q^j\leq j\varepsilon$.
Also $\rho(q)\leq\varepsilon^{-2}$, so
$\rho(q)/U(q)\leq6\varepsilon$ by taking $M=3$.
Consequently
\begin{equation}\label{eq:lower}
 F_\infty(q)\geq\frac14U(q)^2
 \geq\frac{1}{4(M!)^2\varepsilon^{2M}}
 \quad(0<\varepsilon<1/24).
\end{equation}
A rational function has at most a finite order pole at $1$, which
contradicts this inequality for arbitrary $M$.
If the formal series were rational, its germ would agree with the
holomorphic expression \eqref{eq:stable} on the unit disk, so the
same contradiction would apply. Substitution $q=t^2$ gives the
assertion for $F_\infty(t^2)$ as well.
\end{proof}

\subsection{Optimal stabilization and coefficient growth}
\begin{corollary}\label{cor:sharp}
Let $L_0=3$ and $L_p=p+4$ for $p\geq1$.
For every $n>p\geq0$,
\begin{equation*}
\begin{aligned}
 F_{p,n}(q)-F_p(q)&=-q^{n-p+1}+O(q^{n-p+2}),\\
 F_p(q)-F_\infty(q)&=-q^{L_p}+O(q^{L_p+1}).
\end{aligned}
\end{equation*}
The first discrepancy between $F_{p,n}$ and $F_\infty$ is in degree
$\min\{n-p+1,L_p\}$, with coefficient $-1$ when the two exponents
differ and $-2$ when they coincide.
For an integer $s\geq0$ and $p\geq1$, agreement in every degree
through $q^s$ is equivalent to
\begin{equation*}
 p\geq\max\{1,s-3\},\quad n\geq p+s,\quad n>p.
\end{equation*}
For $p=0$, it is equivalent to $s\leq2$ and $n\geq s$, with $n>0$.
All coefficients of $F_\infty$ are nonnegative integers.
\end{corollary}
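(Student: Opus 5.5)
The plan is to expand every difference directly from \eqref{eq:finite}, \eqref{eq:fixed} and \eqref{eq:stable}, keeping track only of the lowest-order terms. Throughout, write $\Delta_k$ as in the proof of Theorem~\ref{thm:stable}, and write $\Delta_\infty=-q^2/((1-q)(1-q^2))$ for its limit.

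\emph{First estimate.} By \eqref{eq:gausstail},
\begin{equation*}
G_{r,N}-U_r=-U_r\bigl(q^{N-r+1}+O(q^{N-r+2})\bigr).
\end{equation*}
With $r=k$ and $N=n+1$ this starts at $q^{n-p+1}$. Since $U_k=1+O(q)$, the term $(G_{k,N}^2-U_k^2)/2$ is $-q^{n-p+1}+O(q^{n-p+2})$. The term $G_{k,N}(q^2)-U_k(q^2)$ starts only at $q^{2(n-p+1)}$, which is of higher order. For $r=k+1$, the difference $G_{k+1,N}-U_{k+1}$ starts at $q^{n-p}$. It is multiplied by $\Delta_k=O(q^2)$, so it contributes only from $q^{n-p+2}$ on. This gives the first displayed identity.

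\emph{Second estimate.} Write $F_p-F_\infty$ as
\begin{equation*}
\tfrac12(U_k^2-U^2)+\tfrac12\bigl(U_k(q^2)-U(q^2)\bigr)+(U_{k+1}-U)\Delta_k+U(\Delta_k-\Delta_\infty).
\end{equation*}
Use $U_k=U\prod_{j>k}(1-q^j)$. Then
\begin{equation*}
U_k-U=-U\,(q^{k+1}+q^{k+2}+q^{k+3}+\cdots)+O(q^{2k+3}).
\end{equation*}
Next, compare $\Delta_k$ with $\Delta_\infty$ coefficientwise. The series $\Pi_{m_k}$ agrees with $(1-q)^{-1}$ below $q^{m_k+1}$. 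The coefficient of $q^j$ in $\Pi_k^2$ is $2k+1-j$ for $k<j\le 2k$, whereas $(1-q)^{-2}$ has coefficient $j+1$. Finally, $\Pi_k(q^2)$ deviates only from $q^{2k+2}$ on.

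At order $q^{k+1}$ the two leading contributions cancel: $-1$ comes from $\tfrac12(U_k^2-U^2)$ and $+1$ from $U(\Delta_k-\Delta_\infty)$. This cancellation is exactly why $L_p$ is $k+3$ rather than $k+1$. The main obstacle is therefore a careful bookkeeping of all contributions through $q^{k+3}$, including the factor $U=1+q+2q^2+\cdots$ multiplying each correction. The goal is to show that the coefficients of $q^{k+1}$ and $q^{k+2}$ vanish and that the coefficient of $q^{k+3}$ is $-1$. I would first do this for generic $k$, with $k\ge 2$, where $m_k+1\ge k+4$ and $2k+2\ge k+4$ push the other deviations out of range. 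The case $k=2$ needs a direct check because of near-boundary indices. The case $p=0$ needs separate treatment: there $\Delta_1=0$ and $m_1=2$, and a direct computation should give the discrepancy $-q^3$.

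\emph{Combined statements and nonnegativity.} Add the two expansions:
\begin{equation*}
F_{p,n}-F_\infty=(F_{p,n}-F_p)+(F_p-F_\infty).
\end{equation*}
The first nonzero coefficient lies in degree $\min\{n-p+1,L_p\}$. It equals $-1$ if the two exponents differ and $-2$ if they coincide, since both leading coefficients are $-1$. Agreement through $q^s$ is then equivalent to $s<n-p+1$ and $s<L_p$. For $p\ge1$ this translates to $n\ge p+s$ and $p\ge s-3$. For $p=0$ it translates to $s\le2$ and $n\ge s$. Nonnegativity and integrality of the coefficients of $F_\infty$ follow because each coefficient is eventually equal to some $b_{2s}(X_{p,n})$, by Theorem~\ref{thm:finite} and the stabilization just proved.
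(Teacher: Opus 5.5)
Your proposal is correct and follows the paper's proof essentially step for step: the same Gaussian tail estimate for the first identity, the same four-term decomposition of $F_p-F_\infty$ with the bounds $m_k+1\geq k+4$ and $2k+2\geq k+4$, a direct computation for $p=0$, and the same-sign argument for the combined discrepancy and the nonnegativity via finite Betti numbers. The bookkeeping you leave open collapses, as in the paper, to $F_p-F_\infty=q^{k+1}U\Pi_\infty(\Pi_\infty-U)+O(q^{k+4})$ with $\Pi_\infty=(1-q)^{-1}$ and $\Pi_\infty-U=-q^2+O(q^3)$; also, $k=2$ needs no separate check, because both inequalities hold there with equality.
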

\begin{proof}
From \eqref{eq:gausstail},
\begin{equation*}
 G_{k,N}-U_k=-q^{n-p+1}+O(q^{n-p+2}),\quad
 G_{k+1,N}-U_{k+1}=O(q^{n-p}).
\end{equation*}
The square term in \eqref{eq:finite} therefore contributes
$-q^{n-p+1}$, while the trace term changes only from degree
$2(n-p+1)$. Since $\Delta_k=O(q^2)$, the correction changes only
from degree $n-p+2$. This proves the first identity.

For $k=p+1\geq2$, let $\Pi_\infty=(1-q)^{-1}$ and
$\Delta_\infty=-q^2/((1-q)(1-q^2))$.
The first product of two distinct tail factors has degree $2k+3$,
so
\begin{equation*}
 U_k=U\bigl(1-q^{k+1}\Pi_\infty+O(q^{2k+3})\bigr).
\end{equation*}
Since $m_k+1\geq k+4$ and $2k+2\geq k+4$,
\begin{equation*}
 \Delta_k-\Delta_\infty=q^{k+1}\Pi_\infty^2+O(q^{k+4}).
\end{equation*}
In \eqref{eq:fixed}, the term
$(U_{k+1}-U)\Delta_\infty$ and the trace and squared tail errors
all have order at least $k+4$. Thus
\begin{equation*}
 F_p-F_\infty
 =q^{k+1}U\Pi_\infty(\Pi_\infty-U)+O(q^{k+4})
 =-q^{k+3}+O(q^{k+4}),
\end{equation*}
because $\Pi_\infty-U=-q^2+O(q^3)$.
For $p=0$,
$F_0=((1-q)(1-q^2))^{-1}$ and
$F_0-F_\infty=-q^3+O(q^4)$.
The leading errors have the same sign, so they cannot cancel.
Agreement of the coefficients in all degrees at most $s$ is
equivalent to the least exponent of a nonzero difference being
greater than $s$. This gives the stated stabilization conditions.
Each stable coefficient is a finite Betti number for sufficiently large parameters.
\end{proof}

\begin{proposition}
For fixed $p\geq0$, let $k=p+1$ and $a_{p,s}=[q^s]F_p(q)$.
The sequence $(a_{p,s})$ is eventually quasipolynomial, and
\begin{equation*}
 a_{p,s}=\frac{s^{2k-1}}{2(k!)^2(2k-1)!}+O_p(s^{2k-2})
 \quad(s\longrightarrow\infty).
\end{equation*}
Here quasipolynomial means polynomial on each residue class modulo
some positive integer, and the implicit constant may depend on $p$.
\end{proposition}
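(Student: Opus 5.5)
The plan is to read both assertions off the partial fraction structure of the rational function $F_p(q)$ in \eqref{eq:fixed}. I would use three facts. First, every denominator appearing there is a product of factors $1-q^j$ with $j\leq 2(k+1)$, so $F_p=P(q)/\prod_j(1-q^j)^{e_j}$ with all poles at roots of unity. Second, a rational function whose poles are all roots of unity has Taylor coefficients that agree, for all large $s$, with a quasipolynomial. Indeed, subtracting the polynomial part (which affects only finitely many coefficients), each term $(1-\zeta^{-1}q)^{-m}$ contributes $\binom{s+m-1}{m-1}\zeta^{-s}$, a polynomial in $s$ times a periodic function. This already gives the eventual quasipolynomiality. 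Third, a pole of order $m$ contributes $O(s^{m-1})$ to the coefficients. So the asymptotic reduces to finding the unique pole of maximal order and its leading Laurent coefficient.

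\emph{Pole orders of the individual terms.} I would take each summand of \eqref{eq:fixed} in turn.
\begin{itemize}
\item[(i)] $\tfrac12U_k(q)^2$. At $q=1$ each factor $1-q^j=j(1-q)(1+O(1-q))$, so $U_k(q)^2=(k!)^{-2}(1-q)^{-2k}(1+O(1-q))$. At a primitive $d$-th root of unity with $d\geq2$, the pole order is $2\lfloor k/d\rfloor\leq k$.
\item[(ii)] $\tfrac12U_k(q^2)$. Its poles lie at roots of unity and have order at most $k$.
\item[(iii)] $U_{k+1}(q)\Delta_k(q)$. Here $\Delta_k$ is a polynomial, so the pole order is at most $k+1$. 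For $k\geq2$ this is at most $2k-1$. For $k=1$, a direct check gives $\Delta_1=(1+q+q^2)-\tfrac12\bigl((1+q)^2+1+q^2\bigr)=0$, so the term vanishes.
\end{itemize}
Hence every pole other than $q=1$ in the leading term has order at most $2k-1$, and contributes $O_p(s^{2k-2})$.

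\emph{The main term.} The remaining contribution is the principal part at $q=1$ of $\tfrac12U_k(q)^2$. Write it as $\tfrac{1}{2(k!)^2}(1-q)^{-2k}$ plus lower-order poles at $1$. The coefficient of $q^s$ in $(1-q)^{-2k}$ is $\binom{s+2k-1}{2k-1}=\tfrac{s^{2k-1}}{(2k-1)!}+O(s^{2k-2})$. This yields the stated leading constant $\tfrac{1}{2(k!)^2(2k-1)!}$, with all remaining contributions $O_p(s^{2k-2})$.

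The only delicate point is to make sure nothing else reaches pole order $2k$. That is exactly the case analysis above: the squared term at roots of unity other than $1$, the trace term, and the correction term, with the degenerate case $k=1$ handled by $\Delta_1=0$. I expect this bookkeeping, rather than any deep input, to be the main obstacle.
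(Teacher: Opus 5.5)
Your proposal is correct and follows essentially the same route as the paper: partial fractions at roots of unity give eventual quasipolynomiality, and the asymptotic comes from the unique pole of order $2k$ at $q=1$, contributed by $\tfrac12U_k(q)^2$. The only difference is cosmetic: the paper uses $\Delta_k(1)=0$ to bound the correction term's pole at $1$ by $k$, whereas you use the cruder bound $k+1\leq 2k-1$ for $k\geq2$ and handle $k=1$ via $\Delta_1=0$, and both suffice.
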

\begin{proof}
All poles of $F_p$ are roots of unity, so partial fractions give
eventual quasipolynomiality. On each residue class the polynomial
has rational coefficients by interpolation from rational values.
Since $\Delta_k(1)=0$, the pole orders at $1$ of the three terms
$U_k(q)^2/2$, $U_k(q^2)/2$, and $U_{k+1}(q)\Delta_k(q)$ are
$2k$, $k$, and at most $k$, respectively. Hence
\begin{equation*}
 \lim_{q\longrightarrow1}(1-q)^{2k}F_p(q)=\frac{1}{2(k!)^2}.
\end{equation*}
At a root of unity of order $d\geq2$, their pole orders are bounded
by $2\lfloor k/d\rfloor$, $k$, and $\lfloor(k+1)/d\rfloor$,
respectively, all at most $k$. Thus the unique pole of order $2k$
is at $1$. Its leading contribution is
$\binom{s+2k-1}{2k-1}/(2(k!)^2)$; all lower-order terms are
$O_p(s^{2k-2})$. This includes $k=1$.
\end{proof}

Let $\mathsf{p}(s)$ be the ordinary partition number, with
$\mathsf{p}(0)=1$ and $\mathsf{p}(s)=0$ for $s<0$. Denote the
coefficient $[q^s]F_\infty(q)$ by $B_s$.

\begin{corollary}
For $s\geq0$,
\begin{equation}\label{eq:stablecoeff}
 B_s=\frac{1}{2}\left[\sum_{a=0}^{s}\mathsf{p}(a)\mathsf{p}(s-a)
 +\mathbf{1}_{2\mid s}\mathsf{p}(s/2)\right]
 -\sum_{j=0}^{s-2}\left(\left\lfloor\frac j2\right\rfloor+1\right)
 \mathsf{p}(s-2-j).
\end{equation}
The trace term is zero for odd $s$.
The coefficientwise stable virtual Betti numbers are $B_s$ in degree
$2s$ and zero in odd degrees. The sequence $(B_s)$ satisfies no
nonzero constant coefficient linear recurrence eventually.
\end{corollary}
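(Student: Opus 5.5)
The plan is to read off the coefficients of \eqref{eq:stable} directly. Since $U(q)=\sum_s\mathsf{p}(s)q^s$, we have
\begin{equation*}
 [q^s]\frac{U(q)^2+U(q^2)}{2}=\frac12\Bigl[\sum_{a=0}^s\mathsf{p}(a)\mathsf{p}(s-a)+\mathbf{1}_{2\mid s}\mathsf{p}(s/2)\Bigr].
\end{equation*}
For the correction term, expand
\begin{equation*}
 \frac{1}{(1-q)(1-q^2)}=\sum_{j\geq0}\Bigl(\Bigl\lfloor\frac j2\Bigr\rfloor+1\Bigr)q^j,
\end{equation*}
because the coefficient of $q^j$ counts pairs $(a,b)$ with $a+2b=j$. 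Multiplying by $q^2U(q)$ then gives the convolution $\sum_{j=0}^{s-2}(\lfloor j/2\rfloor+1)\mathsf{p}(s-2-j)$, which proves \eqref{eq:stablecoeff}. The trace term vanishes for odd $s$ by definition of the indicator.

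For the virtual Betti statement, I will use the proof of Theorem~\ref{thm:finite}. There $P^{\mathrm{vir}}_{X_{p,n}}(t)=F_{p,n}(t^2)$, so $\beta^{\mathrm{vir}}_{2s}(X_{p,n})=[q^s]F_{p,n}(q)$ and the odd virtual numbers vanish, since $F_{p,n}(t^2)$ is even. Passing to the limits of Theorem~\ref{thm:stable}, each coefficient is eventually constant (Corollary~\ref{cor:sharp} gives explicit ranges). Hence the iterated coefficientwise limits are $B_s$ in degree $2s$ and $0$ in odd degrees.

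For the recurrence claim, suppose $(B_s)$ eventually satisfied a nonzero constant-coefficient linear recurrence $\sum_{i=0}^r c_iB_{s+i}=0$ for $s\geq s_0$. Then $Q(q)F_\infty(q)$ would be a polynomial, where $Q(q)=\sum_i c_iq^{r-i}$ is the reversed characteristic polynomial. The standard generating-function argument makes this precise: the product's coefficients vanish beyond degree $s_0+r$. If $Q$ is nonzero, this makes $F_\infty$ rational over $\mathbb{Q}\subset\mathbb{C}$, contradicting the nonrationality in Theorem~\ref{thm:stable}.

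The only point that needs care is the nonzero-recurrence step: a nonzero recurrence must give a nonzero $Q$. This holds because $Q$ has the same coefficients as the recurrence, just reversed, and so is a nonzero polynomial even if the leading $c_r$ or constant $c_0$ vanishes. With that settled, everything reduces to Theorem~\ref{thm:stable}.
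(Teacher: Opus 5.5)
Your proposal is correct and follows the paper's proof: you extract coefficients from \eqref{eq:stable} using $U(q)=\sum_s\mathsf{p}(s)q^s$ and $((1-q)(1-q^2))^{-1}=\sum_{j\geq0}(\lfloor j/2\rfloor+1)q^j$, and you deduce the non-recurrence claim from the nonrationality in Theorem~\ref{thm:stable}. Your extra remarks on the virtual Betti numbers (via Theorem~\ref{thm:finite} and coefficientwise stabilization) and on the nonzero reversed characteristic polynomial make explicit what the paper's two-sentence proof leaves implicit.
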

\begin{proof}
Use $U(q)=\sum_s\mathsf{p}(s)q^s$ and
$((1-q)(1-q^2))^{-1}=\sum_{j\geq0}(\lfloor j/2\rfloor+1)q^j$ in \eqref{eq:stable}.
An eventual recurrence would make $F_\infty$ rational, in contradiction to Theorem~\ref{thm:stable}.
\end{proof}

\section{Homotopy kernels and relative homology}\label{sec:homotopy}
Let $M_{p,n}=\min\{2p+1,2(n-p)\}$.
We use Hu's natural stability isomorphisms
\begin{equation}\label{eq:homotopystability}
 \pi_m(s_{p,n}):\pi_m(X_{p,n})\xrightarrow{\ \cong\ }\pi_m(D(2)),
 \quad 0\leq m\leq M_{p,n},
\end{equation}
as stated in \cite[Thm.~1.3, P.~2366; Propositions~3.3 and~3.5]{Hu10}.
The finite stages are simply connected
\cite[Cor.~3.4, P.~2371]{Hu10}, as is $D(2)$ by stabilization.
For a map $A\longrightarrow Z$, relative groups refer to its
mapping cylinder pair.
We use the rational relative Hurewicz theorem in its usual form:
for maps of simply connected CW complexes, the first nonzero rational
relative homotopy and homology groups occur in the same degree and
are naturally isomorphic. This is the rational form of
\cite[Thm.~4.32, P.~366]{Hatcher}, using rationalization
\cite[Thm.~1.5 and Def.~1.7]{Hess}.

\begin{lemma}\label{lem:stablecohomology}
The integral homology and cohomology of $D(2)$ are finitely generated
in each degree. For $s\geq0$,
\begin{equation*}
\begin{gathered}
 H^{2s+1}(D(2);\Lambda)=H_{2s+1}(D(2);\Lambda)=0,\\
 H^{2s}(D(2);\Lambda)\cong\Lambda^{B_s},\quad
 H_{2s}(D(2);\Lambda)\cong\Lambda^{B_s}.
\end{gathered}
\end{equation*}
In particular, $P_{D(2)}(t)=F_\infty(t^2)$.
\end{lemma}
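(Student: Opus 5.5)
The plan is to pass from the finite stages to $D(2)$ through a homology stability statement and then quote Corollary~\ref{cor:awaytwo} and Corollary~\ref{cor:sharp}. The first step converts the homotopy stability \eqref{eq:homotopystability} into homology stability. Since $s_{p,n}$ induces isomorphisms on $\pi_m$ for $m\leq M_{p,n}$ and the spaces are simply connected, $s_{p,n}$ is $M_{p,n}$-connected up to the top degree. Whitehead's theorem, in its homology form for an $M$-connected map between simply connected CW complexes, then gives that
\begin{equation*}
 H_m(X_{p,n};\mathbb{Z})\longrightarrow H_m(D(2);\mathbb{Z})
\end{equation*}
is an isomorphism for $m<M_{p,n}$ and surjective for $m=M_{p,n}$. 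Both the first step and this one use the CW models fixed in the definition of the colimits. Alternatively, $D(2)$ is a colimit over a directed system of compact finite CW complexes along closed inclusions. Homology then commutes with the colimit, and the homotopy stability shows that the transition maps are eventually isomorphisms in each fixed degree.

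Next comes finiteness. Fix $m$ and choose $p,n$ with $M_{p,n}>m+1$, for instance $p\geq m$ and $n-p\geq m$. Then $H_m(D(2);\mathbb{Z})\cong H_m(X_{p,n};\mathbb{Z})$, and $H_{m+1}$ is likewise identified. Since $X_{p,n}$ is a projective variety of finite CW type, these groups are finitely generated. The universal coefficient theorem then gives finite generation of integral cohomology in each degree, because the relevant $\operatorname{Hom}$ and $\operatorname{Ext}$ terms involve only $H_m$ and $H_{m-1}$.

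I would then transfer coefficients. Tensoring with the flat $\mathbb{Z}$-module $\Lambda$ gives $H_m(D(2);\Lambda)\cong H_m(X_{p,n};\Lambda)$ in the same range. By Corollary~\ref{cor:awaytwo} this group is zero for odd $m$ and free of rank $b_{2s}(X_{p,n})$ for $m=2s$. Corollary~\ref{cor:sharp} shows that $b_{2s}(X_{p,n})=[q^s]F_{p,n}=B_s$ once $p\geq\max\{1,s-3\}$ and $n\geq p+s$. Enlarging $p$ and $n$ so that all conditions hold simultaneously gives $H_{2s}(D(2);\Lambda)\cong\Lambda^{B_s}$ and the vanishing of odd homology. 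Cohomology over $\Lambda$ follows from the universal coefficient theorem over the PID $\Lambda$: homology is free and finitely generated in each degree, so $\operatorname{Ext}$ vanishes and $H^{2s}\cong\operatorname{Hom}_\Lambda(H_{2s},\Lambda)\cong\Lambda^{B_s}$, while odd cohomology vanishes. Tensoring further to $\mathbb{Q}$ gives $P_{D(2)}(t)=F_\infty(t^2)$.

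The main obstacle is the homology comparison itself: justifying that $s_{p,n}$ is highly connected as a map of CW complexes, so that the relative Hurewicz theorem applies, with basepoints and the weak topology handled correctly. I would first try the mapping cylinder pair $(\mathrm{Cyl}(s_{p,n}),X_{p,n})$. Its relative homotopy vanishes through degree $M_{p,n}$ by the long exact sequence together with \eqref{eq:homotopystability}. The integral relative Hurewicz theorem \cite[Thm.~4.32]{Hatcher} then kills relative homology through degree $M_{p,n}$, which gives the isomorphism of homology below $M_{p,n}$.
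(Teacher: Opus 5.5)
Your proposal is correct and takes essentially the paper's route: relative Hurewicz on the mapping cylinder pair of $s_{p,n}$ turns \eqref{eq:homotopystability} into integral homology isomorphisms below $M_{p,n}$, and then Corollaries~\ref{cor:awaytwo} and~\ref{cor:sharp} at a sufficiently large stage give the groups and ranks. The paper applies universal coefficients directly to the integral isomorphism, whereas you pass through $\Lambda$-homology first, which is equivalent; also, your sample choice $p\geq m$, $n-p\geq m$ only gives $M_{p,n}\geq 2m$, which exceeds $m+1$ only for $m\geq 2$, but degrees $0$ and $1$ are covered by connectivity or by taking larger parameters.
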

\begin{proof}
The relative homotopy groups of $s_{p,n}$ vanish through degree
$M_{p,n}$ by \eqref{eq:homotopystability}.
For $M_{p,n}\geq2$, relative Hurewicz gives the same integral
homology vanishing. In degrees zero and one, this follows from
connectedness and simple connectedness, respectively.
Thus stabilization induces an integral homology isomorphism for
$m<M_{p,n}$, and the universal coefficient theorem gives
\begin{equation}\label{eq:cohomologystability}
 s_{p,n}^*:H^m(D(2);K)\xrightarrow{\ \cong\ }H^m(X_{p,n};K)
 \quad(m<M_{p,n})
\end{equation}
for any coefficient ring $K$.
Every fixed degree is covered by a sufficiently large finite stage,
which has finite CW type. This proves integral finite generation.
For degree $2s$, choose $p\geq s+1$ and $n\geq p+s+2$.
Corollaries~\ref{cor:awaytwo} and~\ref{cor:sharp}, together with
\eqref{eq:cohomologystability}, give the displayed groups and ranks.
The comparison is induced by the stabilization maps and follows
from their connectivity.
\end{proof}

\begin{proposition}\label{prop:firstkernel}
Let $A\xrightarrow{u}Z\xrightarrow{v}W$ be maps of simply connected
CW complexes with finite dimensional rational cohomology in each
degree. Suppose that $\theta=v\circ u$ is a rational homotopy
equivalence. Define
$\Delta b_m=b_m(Z)-b_m(A)$ and $K_m(v)=\ker v_{m,\mathbb{Q}}$.
Then $\Delta b_m\geq0$, $v_{m,\mathbb{Q}}$ is surjective, and
$v^*H^*(W)$ is a ring retract of $H^*(Z)$.
For $\nu\geq2$, the conditions
\begin{equation*}
 \Delta b_m=0\ (0\leq m<\nu),\quad
 K_m(v)=0\ (2\leq m<\nu)
\end{equation*}
are equivalent. Under either condition, relative projection and
Hurewicz give
\begin{equation}\label{eq:firstkernel}
 K_\nu(v)\cong H_\nu(Z,A;\mathbb{Q}),\quad
 \dim_{\mathbb{Q}}K_\nu(v)=\Delta b_\nu,
\end{equation}
and a perfect evaluation pairing
\begin{equation*}
 K_\nu(v)\times
 \ker\bigl(u^*:H^\nu(Z)\longrightarrow H^\nu(A)\bigr)
 \longrightarrow\mathbb{Q}.
\end{equation*}
\end{proposition}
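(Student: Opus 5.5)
Since $\theta=v\circ u$ is a rational homotopy equivalence, $\theta_{m,\mathbb{Q}}=v_{m,\mathbb{Q}}\circ u_{m,\mathbb{Q}}$ and $\theta^*=u^*\circ v^*$ are isomorphisms. Hence $v_{m,\mathbb{Q}}$ is surjective, $u_{m,\mathbb{Q}}$ is injective, $u^*$ is surjective and $v^*$ is injective. Composing with $(\theta^*)^{-1}$, the map $\pi=v^*\circ(\theta^*)^{-1}\circ u^*$ is a ring endomorphism of $H^*(Z)$ that is idempotent and has image $v^*H^*(W)$. So $v^*H^*(W)$ is a ring retract, and $H^m(Z)\cong v^*H^m(W)\oplus\ker(u^*)$ with $v^*H^m(W)\cong H^m(A)$. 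This gives $\Delta b_m=\dim\ker(u^*|_{H^m(Z)})\geq0$. On the homotopy side the analogous splitting is $\pi_m(Z)\otimes\mathbb{Q}\cong u_{m,\mathbb{Q}}(\pi_m(A)\otimes\mathbb{Q})\oplus K_m(v)$, which shows $K_m(v)\cong\operatorname{coker}u_{m,\mathbb{Q}}$ while $\ker u_{m,\mathbb{Q}}=0$.

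To prove the equivalence, I will pass to the pair $(Z,A)$ given by the mapping cylinder of $u$. In the long exact rational homotopy sequence, injectivity of each $u_{m,\mathbb{Q}}$ gives $\pi_m(Z,A)\otimes\mathbb{Q}\cong\operatorname{coker}u_{m,\mathbb{Q}}\cong K_m(v)$ for all $m\geq2$. Here $\pi_1(Z,A)=0$ by simple connectivity, and I would phrase rational relative groups via rationalization as the paper does. In the homology sequence, $u_*$ is injective because it is dual to the surjection $u^*$ with finite-dimensional rational groups. Hence $H_m(Z,A;\mathbb{Q})\cong\operatorname{coker}(u_*)_m$, and this has dimension $\Delta b_m$. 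So the first condition says $H_m(Z,A;\mathbb{Q})=0$ for $m<\nu$, and the second says $\pi_m(Z,A)\otimes\mathbb{Q}=0$ for $m<\nu$, where degrees $0$ and $1$ are automatic for the relative homotopy groups of simply connected spaces. The rational relative Hurewicz theorem quoted above says the first nonzero relative groups occur in the same degree and are isomorphic. It follows that each condition holds exactly when the other does: both assert that this first degree is at least $\nu$. Under either condition, Hurewicz also gives $\pi_\nu(Z,A)\otimes\mathbb{Q}\cong H_\nu(Z,A;\mathbb{Q})$, and composing with the projection identification above yields \eqref{eq:firstkernel} with $\dim K_\nu(v)=\Delta b_\nu$.

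For the pairing, split injectivity of $u_*$ in homology gives $H^\nu(Z,A;\mathbb{Q})\cong\ker(u^*:H^\nu(Z)\to H^\nu(A))$. Indeed, the cohomology long exact sequence has $H^{\nu-1}(Z)\to H^{\nu-1}(A)$ surjective, so $H^\nu(Z,A)\to H^\nu(Z)$ is injective with image $\ker u^*$. The universal coefficient theorem over $\mathbb{Q}$ gives a perfect Kronecker pairing $H_\nu(Z,A)\times H^\nu(Z,A)\to\mathbb{Q}$. Transporting it through the two identifications gives a pairing which, concretely, evaluates a class of $\ker u^*$ on the Hurewicz image in $H_\nu(Z)$ of a lift in $\pi_\nu(Z)\otimes\mathbb{Q}$ of an element of $K_\nu(v)$. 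I must check that this evaluation agrees with the relative one; this follows from naturality of Hurewicz along $Z\to(Z,A)$. The main obstacle I expect is exactly this bookkeeping: keeping the identification $K_\nu(v)\to\pi_\nu(Z,A)\otimes\mathbb{Q}$ compatible with absolute Hurewicz, and making sure rational relative homotopy of a pair is legitimately handled via rationalization in the simply connected setting. The rest is linear algebra from the retraction.
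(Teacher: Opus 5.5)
Your proposal is correct and follows essentially the same route as the paper. The paper's ingredients are the ones you use: the retraction obtained from $\theta^*=u^*v^*$; the projection $z\mapsto z-u_{m,\mathbb{Q}}\theta_{m,\mathbb{Q}}^{-1}v_{m,\mathbb{Q}}z$, which identifies $\operatorname{coker}u_{m,\mathbb{Q}}\cong\pi_m(Z,A)\otimes\mathbb{Q}$ with $K_m(v)$; the rational relative Hurewicz theorem, which gives both the equivalence and $K_\nu(v)\cong H_\nu(Z,A;\mathbb{Q})$; and universal coefficients together with naturality of evaluation, which give the pairing. The compatibility you flag as the main obstacle is settled exactly as you suggest, by naturality of Hurewicz along $(Z,\varnothing)\to(Z,A)$ and the fact that classes in $\ker u^*$ vanish on $u_*H_\nu(A;\mathbb{Q})$.
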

\begin{proof}
The isomorphism $\theta^*=u^*v^*$ makes $u^*$ surjective and $v^*$
injective, with retraction $(\theta^*)^{-1}u^*$.
Consequently
\begin{equation*}
 0\longrightarrow H^m(Z,A)\longrightarrow H^m(Z)
 \xrightarrow{u^*}H^m(A)\longrightarrow0
\end{equation*}
is exact and $\dim H^m(Z,A)=\Delta b_m$.
On rational homotopy, $u_{m,\mathbb{Q}}$ is injective and
$v_{m,\mathbb{Q}}$ is surjective. The relative homotopy sequence
therefore identifies $\pi_m(Z,A)\otimes\mathbb{Q}$ with
$\operatorname{coker}u_{m,\mathbb{Q}}$.
Projection identifies this cokernel with $K_m(v)$: the coset of
$z$ has the representative
\begin{equation*}
 z-u_{m,\mathbb{Q}}\theta_{m,\mathbb{Q}}^{-1}v_{m,\mathbb{Q}}z,
\end{equation*}
which belongs to $K_m(v)$.
The rational relative Hurewicz theorem then gives the equivalence and
\eqref{eq:firstkernel}. Universal coefficient duality and naturality
of evaluation give the stated pairing.
\end{proof}

\begin{proof}[Proof of Theorem~\ref{thm:first}]
The series assertion is Lemma~\ref{lem:stablecohomology}.
The Lawson--Michelsohn formula
\cite[Thm.~4.4, P.~253]{LM} states, with suitable integral generators,
\begin{equation*}
\begin{gathered}
 \pi_{2j}(BU)=\mathbb{Z},\quad \pi_{2j-1}(BU)=0,\\
 \pi_{2j}(\vartheta):\mathbb{Z}\longrightarrow\mathbb{Z},
 \quad z\longmapsto(j-1)!z\quad(j\geq1).
\end{gathered}
\end{equation*}
Thus $\vartheta$ is a rational homotopy equivalence.
By \eqref{eq:stablecoeff}, the even Betti numbers of $D(2)$ through
degree eight are $(1,1,2,3,6)$, whereas those of $BU$ are
$(1,1,2,3,5)$. Both spaces have no odd rational cohomology.
Proposition~\ref{prop:firstkernel} applies to
$BU\xrightarrow{i}D(2)\xrightarrow{J}K_{\mathrm{ev}}$ and gives
\begin{equation*}
 H_m(D(2),BU;\mathbb{Q})=0\ (m<8),\quad
 H_8(D(2),BU;\mathbb{Q})\cong\ker J_{8,\mathbb{Q}}\cong\mathbb{Q}.
\end{equation*}
Rational surjectivity onto $\pi_8(K_{\mathrm{ev}})\otimes\mathbb{Q}
=\mathbb{Q}$ gives $\pi_8(D(2))\otimes\mathbb{Q}\cong\mathbb{Q}^{2}$.

Choose generators $\beta\in\pi_8(BU)$ and
$e\in\pi_8(K_{\mathrm{ev}})$, and let $\alpha=i_*\beta$. Then
$J_*\alpha=6e$.
Choose $\gamma\in\pi_8(D(2))$ whose rational image is linearly
independent of the rational image of $\alpha$, and write $J_*\gamma=ae$, $a\in\mathbb{Z}$.
Then
\begin{equation}\label{eq:eta}
 \eta=6\gamma-a\alpha
\end{equation}
has infinite order and belongs to $\ker J_*$.
Exactness of tensoring with $\mathbb{Q}$ gives rank one for the
integral kernel. Finally, \eqref{eq:homotopystability} transfers
these assertions to each stage with $p\geq4$, $n-p\geq4$.
For $(p,n)=(4,8)$, the range ends at $M_{4,8}=8$, which suffices.
\end{proof}

For a map of pairs $(Z,\varnothing)\longrightarrow(Z,A)$, denote
the induced relative to absolute cohomology homomorphism by
$\epsilon_{Z,A}$, with coefficient ring understood.
Let $\widetilde z_j=J^*u_j\in H^{2j}(D(2);\mathbb{Z})$ and let
$z_j$ be its rational image.

\begin{proposition}\label{prop:relative}
The restriction $i^*:H^*(D(2);\mathbb{Z})\longrightarrow
H^*(BU;\mathbb{Z})$ is split surjective as a homomorphism of graded
rings, with section $c_j\longmapsto\widetilde z_j$.
If $I_{\mathbb{Z}}=\ker i^*$, then $\epsilon_{D(2),BU}$ identifies
$H^*(D(2),BU;\mathbb{Z})$ with $I_{\mathbb{Z}}$ as a graded ring
without unit, and
\begin{equation}\label{eq:integralsplit}
 H^*(D(2);\mathbb{Z})
 =\mathbb{Z}[\widetilde z_1,\widetilde z_2,\ldots]\oplus I_{\mathbb{Z}}
\end{equation}
as graded abelian groups. The rational polynomial summand is
$A_{\mathrm{cl}}=\mathbb{Q}[z_1,z_2,\ldots]$.
For $s\geq0$,
\begin{equation}\label{eq:relativegroups}
\begin{gathered}
 H^{2s+1}(D(2),BU;\Lambda)=H_{2s+1}(D(2),BU;\Lambda)=0,\\
 H^{2s}(D(2),BU;\Lambda)\cong\Lambda^{B_s-\mathsf{p}(s)},\quad
 H_{2s}(D(2),BU;\Lambda)\cong\Lambda^{B_s-\mathsf{p}(s)}.
\end{gathered}
\end{equation}
The integral relative groups are finitely generated in every degree
and have no odd primary torsion. The relative Poincar\'e series is
$P_{(D(2),BU)}(t)=F_{\mathrm{rel}}(t^2)$, where
\begin{equation}\label{eq:relative}
 F_{\mathrm{rel}}(q)=F_\infty(q)-U(q)
 =\frac{U(q)^2+U(q^2)}2
 -\left(1+\frac{q^2}{(1-q)(1-q^2)}\right)U(q).
\end{equation}
It is nonrational over $\mathbb{C}$.
Over any field of characteristic different from $2$, relative odd
groups vanish and even groups have dimension $B_s-\mathsf{p}(s)$.
\end{proposition}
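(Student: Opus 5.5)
The plan is to deduce everything from the integral identity $\vartheta^*u_j=c_j$ and the structure of $H^*(BU;\mathbb{Z})$ and $H^*(K_{\mathrm{ev}};\mathbb{Z})$, then feed the Betti numbers of Lemma~\ref{lem:stablecohomology} into the long exact sequence of the pair.

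\emph{Splitting.} Since $H^*(BU;\mathbb{Z})=\mathbb{Z}[c_1,c_2,\ldots]$ is polynomial, the assignment $c_j\mapsto\widetilde z_j$ defines a graded ring homomorphism $\tau:H^*(BU;\mathbb{Z})\to H^*(D(2);\mathbb{Z})$. Then $i^*\widetilde z_j=i^*J^*u_j=\vartheta^*u_j=c_j$, so $i^*\tau=\mathrm{id}$. Thus $i^*$ is split surjective as a ring map, and $H^*(D(2);\mathbb{Z})=\operatorname{im}\tau\oplus I_{\mathbb{Z}}$ as groups. Injectivity of $\tau$ shows that the $\widetilde z_j$ are algebraically independent, which gives \eqref{eq:integralsplit}. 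Rationally, $H^*(K_{\mathrm{ev}};\mathbb{Q})=\mathbb{Q}[u_1,u_2,\ldots]$, so $A_{\mathrm{cl}}$ is generated by the $z_j$; independence follows from the rational version of $\tau$.

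\emph{Relative cohomology.} Surjectivity of $i^*$ in every degree, integrally, splits the long exact sequence of the pair into short exact sequences
\[
0\to H^m(D(2),BU;\mathbb{Z})\xrightarrow{\epsilon}H^m(D(2);\mathbb{Z})\xrightarrow{i^*}H^m(BU;\mathbb{Z})\to0 .
\]
Hence $\epsilon$ is injective with image $I_{\mathbb{Z}}$. Since $\epsilon$ is compatible with the relative cup product $H^*(Z,A)\otimes H^*(Z,A)\to H^*(Z,A)$, it is a ring isomorphism without unit. The same argument works over $\Lambda$ and over any field $F$ of characteristic not $2$, because $i^*$ remains split surjective after change of coefficients.

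\emph{Relative groups.} Over $\Lambda$, the relative groups are kernels of surjections between finite free modules, using Lemma~\ref{lem:stablecohomology} and the fact that $H^*(BU;\Lambda)$ is free with ranks $\mathsf{p}(s)$ in degree $2s$ and zero in odd degrees. So $H^{2s}(D(2),BU;\Lambda)$ is free of rank $B_s-\mathsf{p}(s)$, and the odd relative groups vanish. For homology, both spaces have finitely generated integral homology in each degree: $D(2)$ by Lemma~\ref{lem:stablecohomology} and $BU$ classically. The homology long exact sequence then shows the relative groups are finitely generated. The universal coefficient argument of Corollary~\ref{cor:awaytwo} over the PID $\Lambda$ converts freeness of relative cohomology into freeness of relative homology with the same ranks, and in particular rules out odd primary torsion integrally. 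The field statement follows the same way.

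\emph{Series and nonrationality.} Writing $P_{BU}(t)=U(t^2)$, we get $F_{\mathrm{rel}}=F_\infty-U$, and substituting \eqref{eq:stable} gives the displayed expression in \eqref{eq:relative}. For nonrationality, I would repeat the growth argument of Theorem~\ref{thm:stable}. By \eqref{eq:growth}, $U(q)\to\infty$ faster than any power of $\varepsilon^{-1}$ as $q\to1^-$. Then
\[
F_{\mathrm{rel}}\ge \tfrac12U^2-(1+\rho)U=U\bigl(\tfrac12U-1-\rho\bigr)\ge\tfrac14U^2
\]
for $\varepsilon$ small, since $(1+\rho)/U\to0$. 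So the pole-order contradiction applies unchanged, and it transfers to $F_{\mathrm{rel}}(t^2)$.

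The main obstacle is the homology statement for the pair. One has to check that the relative universal coefficient argument applies; this needs finite type of the pair, which the long exact sequence of finitely generated pieces supplies. Everything else is formal once surjectivity of $i^*$ is established.
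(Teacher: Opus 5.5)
Your proposal is correct and follows the paper's proof essentially step for step. It uses the ring section $c_j\mapsto\widetilde z_j$ coming from $\vartheta^*u_j=c_j$, the resulting short exact sequences of the pair that identify relative cohomology with $I_{\mathbb{Z}}$, freeness over $\Lambda$ from Lemma~\ref{lem:stablecohomology} together with the universal coefficient argument of Corollary~\ref{cor:awaytwo}, and a growth bound as $q\to1^-$ to exclude rationality. The only differences are cosmetic: you identify $A_{\mathrm{cl}}$ via $H^*(K_{\mathrm{ev}};\mathbb{Q})=\mathbb{Q}[u_1,u_2,\ldots]$ rather than via the rational isomorphism $\vartheta^*$, and you bound $F_{\mathrm{rel}}\geq U^2/4$ directly from \eqref{eq:relative} instead of deducing $U^2/8$ from \eqref{eq:lower}.
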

\begin{proof}
By \cite[Thm.~3.6, P.~250--252]{LM},
$i^*\widetilde z_j=\vartheta^*u_j=c_j$.
Since $H^*(BU;\mathbb{Z})=\mathbb{Z}[c_1,c_2,\ldots]$, these lifts
define a ring section $\tau$ of $i^*$.
The relative cohomology sequence becomes
\begin{equation*}
 0\longrightarrow H^m(D(2),BU;\mathbb{Z})
 \xrightarrow{\epsilon_{D(2),BU}}H^m(D(2);\mathbb{Z})
 \xrightarrow{i^*}H^m(BU;\mathbb{Z})\longrightarrow0.
\end{equation*}
The first map preserves products and has image $I_{\mathbb{Z}}$.
The section gives \eqref{eq:integralsplit} and algebraic independence
of the $\widetilde z_j$. Over $\mathbb{Q}$, $\vartheta^*$ is an
isomorphism, so the polynomial summand is $A_{\mathrm{cl}}$.
Only the integral classes $u_j$ are needed; no polynomial presentation
of $H^*(K_{\mathrm{ev}};\mathbb{Z})$ is asserted.

Lemma~\ref{lem:stablecohomology} and the homology sequence of the
pair give finite generation of all relative integral groups.
Over $\Lambda$, the section and that lemma make the relative even
cohomology free of rank $B_s-\mathsf{p}(s)$ and the odd cohomology
zero. The same universal coefficient argument as in
Corollary~\ref{cor:awaytwo} gives the relative homology in
\eqref{eq:relativegroups}. Localization and coefficient change yield
the torsion and field statements.
Taking ranks gives \eqref{eq:relative}.
By \eqref{eq:lower},
$F_{\mathrm{rel}}(q)\geq U(q)^2/8$ for $q$ sufficiently close to $1$,
since $U(q)\longrightarrow\infty$. Equation~\eqref{eq:growth}
therefore excludes rationality, even after substituting $q=t^2$.
\end{proof}

Let $I=\ker(i^*:H^*(D(2))\longrightarrow H^*(BU))$ and
$I^m=I\cap H^m(D(2))$.
For a connected graded algebra $E$, let
$Q(E)=E^{>0}/(E^{>0})^2$ be its indecomposable quotient.
For a based space $Z$, denote the integral Hurewicz homomorphism
$\pi_m(Z)\longrightarrow H_m(Z;\mathbb{Z})$ by
$\operatorname{hur}_m$.

\begin{corollary}
The space $I^8$ is one dimensional, and evaluation gives a perfect
pairing $\ker J_{8,\mathbb{Q}}\times I^8\longrightarrow\mathbb{Q}$.
For the element $\eta$ in \eqref{eq:eta}, there is a unique
$\zeta_8\in I^8$ such that
$\langle\zeta_8,\operatorname{hur}_8(\eta)\rangle=1$.
It is indecomposable, and $\dim_{\mathbb{Q}}Q(H^*(D(2)))^8=2$.
Its pullback to every finite stage in Theorem~\ref{thm:first} is
nonzero and indecomposable and detects the corresponding kernel
class.
\end{corollary}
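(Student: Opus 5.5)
The first step is to show that $\dim I^8=1$. By Proposition~\ref{prop:relative} over $\mathbb{Q}$, the map $\epsilon_{D(2),BU}$ identifies $I^8$ with $H^8(D(2),BU)$, which has dimension $B_4-\mathsf{p}(4)=6-5=1$. The same value follows from the Betti numbers $(1,1,2,3,6)$ and $(1,1,2,3,5)$ used in the proof of Theorem~\ref{thm:first}. Next, apply Proposition~\ref{prop:firstkernel} to $BU\xrightarrow{i}D(2)\xrightarrow{J}K_{\mathrm{ev}}$ with $\nu=8$. That proof already verified the hypotheses ($\vartheta$ is a rational equivalence and $\Delta b_m=0$ for $m<8$). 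Since $I^8=\ker(u^*)$ with $u=i$, the proposition gives the perfect pairing $\ker J_{8,\mathbb{Q}}\times I^8\to\mathbb{Q}$ directly.

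For $\zeta_8$, note that the rational image of $\eta$ from \eqref{eq:eta} is a nonzero element of the one-dimensional space $\ker J_{8,\mathbb{Q}}$. Perfectness then gives a unique $\zeta_8\in I^8$ with $\langle\zeta_8,\operatorname{hur}_8(\eta)\rangle=1$. The pairing in Proposition~\ref{prop:firstkernel} comes from evaluating cohomology on the rational Hurewicz image, which is compatible with the integral Hurewicz map after tensoring. Indecomposability follows from the fact that evaluation of a decomposable class on a Hurewicz image vanishes. A product $xy$ with $|x|,|y|>0$ satisfies $\langle xy,h(\eta)\rangle=\langle x\otimes y,\Delta_*h(\eta)\rangle$, and $h(\eta)$ is primitive because spherical classes are primitive. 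So $\zeta_8$ pairs to zero with every element of $(H^{>0})^2$, and hence $\zeta_8\notin(H^{>0})^2$.

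For $\dim Q(H^*(D(2)))^8=2$, I would use the splitting $H=A_{\mathrm{cl}}\oplus I$ of Proposition~\ref{prop:relative}, with $A_{\mathrm{cl}}=\mathbb{Q}[z_1,z_2,\dots]$. In degrees $2,4,6$ we have $I=0$, so $H^{<8}=A_{\mathrm{cl}}^{<8}$, and $(H^{>0})^2$ in degree $8$ equals the decomposables of $A_{\mathrm{cl}}$ in degree $8$. These have dimension $\mathsf{p}(4)-1=4$, since every product of positive-degree elements has factors of degree below $8$. Thus $Q^8=H^8/(\text{dim }4)$ has dimension $6-4=2$, spanned by $z_4$ and $\zeta_8$. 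The independence of these two classes can also be seen from the pairing: $z_4$ pairs nontrivially with a lift of the generator of $\pi_8(K_{\mathrm{ev}})$, while $\zeta_8$ kills $i_*\pi_8(BU)$.

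For the finite-stage statement, take $p\geq4$ and $n-p\geq4$, so that $M_{p,n}\geq8$ and \eqref{eq:cohomologystability} gives an isomorphism $s_{p,n}^*$ in degrees $<8$. In degree $8$ we may only have $8=M_{p,n}$, for instance at $(4,8)$, so an isomorphism is not automatic there. Instead I would argue through homotopy. By \eqref{eq:homotopystability}, $\eta=s_{p,n*}\eta'$ for some $\eta'\in\pi_8(X_{p,n})$, so $\langle s_{p,n}^*\zeta_8,\operatorname{hur}_8(\eta')\rangle=1$, and the pullback is nonzero and detects $\eta'\in\ker(J\circ s_{p,n})_*$. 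For indecomposability, the same primitivity argument applies on $X_{p,n}$: a spherical class kills all products in $H^*(X_{p,n})$.

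The step I regard as the main obstacle is justifying that the evaluation pairing of Proposition~\ref{prop:firstkernel} is literally $\langle\zeta,\operatorname{hur}(\cdot)\rangle$. To settle this I would trace through relative Hurewicz naturality for the pair $(D(2),BU)$, together with $\epsilon_{D(2),BU}$.
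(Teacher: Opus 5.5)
Your proposal is correct and follows essentially the same route as the paper. It uses the pairing from Proposition~\ref{prop:firstkernel} with $\nu=8$, normalizes on $\eta$, obtains $\dim Q^8=6-4=2$ because $H^{<8}$ is the classical polynomial subring, and lifts $\eta$ to a finite stage via \eqref{eq:homotopystability}, with indecomposability because positive-degree products vanish on spherical classes. The compatibility you flag as the main obstacle is routine and resolves as you suggest: naturality of the Hurewicz map for $D(2)\to(D(2),BU)$, together with $\epsilon_{D(2),BU}$, shows that the relative pairing is exactly $\langle\zeta,\operatorname{hur}_8(\cdot)\rangle$, which the paper also uses implicitly.
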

\begin{proof}
The pairing is Proposition~\ref{prop:firstkernel} in degree eight.
The nonzero evaluation on $\eta\otimes1$ determines the normalization.
Below degree eight, the cohomology is the classical polynomial subring,
by equality of Betti numbers and injectivity of $J^*$.
The decomposable degree eight subspace is therefore spanned by
$z_1^4,z_1^2z_2,z_2^2,z_1z_3$.
Restriction to $BU$ proves that these classes are independent and
that their span meets $I^8$ trivially. Thus the indecomposable
quotient has dimension $6-4=2$, represented by $z_4,\zeta_8$.
At a finite stage, let $\eta_{p,n}$ denote the unique class satisfying
$\pi_8(s_{p,n})(\eta_{p,n})=\eta$.
Naturality gives
$\langle s_{p,n}^*\zeta_8,\operatorname{hur}_8(\eta_{p,n})\rangle=1$.
Products of positive degree classes evaluate trivially on a sphere,
which also proves the finite stage indecomposability.
\end{proof}

\begin{proof}[Proof of Theorem~\ref{thm:unbounded}]
Equation~\eqref{eq:lower} gives
\begin{equation}\label{eq:excessgrowth}
 \frac{F_\infty(q)}{U(q)}\geq\frac14U(q),\quad 0<1-q<1/24.
\end{equation}
The right side grows faster than every finite power of $(1-q)^{-1}$.
If $H$ were generated over $A_{\mathrm{cl}}$ by finitely many
elements, their homogeneous components would also generate it.
By odd vanishing, take generators in degrees $2d_1,\ldots,2d_r>0$.
The resulting polynomial algebra surjection gives
\begin{equation*}
 \frac{F_\infty(q)}{U(q)}\leq\prod_{j=1}^{r}(1-q^{d_j})^{-1}
 \quad(0<q<1),
\end{equation*}
contradicting \eqref{eq:excessgrowth}.
A finite homogeneous basis of $Q_{\mathrm{cl}}(H)$ would lift to
algebra generators over $A_{\mathrm{cl}}$, by induction on degree;
conversely such generators span that quotient. This proves the
equivalence. Finite dimensionality in each degree and equality with
the classical subring below degree eight give the asserted
unbounded even degrees.

Let $(\bigwedge\mathcal{V},d)$ be a Sullivan minimal model of $D(2)$.
By \cite[Thm.~1.24, Cor.~1.26 and Def.~1.27]{Hess},
$\mathcal{V}^m$ is finite dimensional and
$(\mathcal{V}^m)^\vee\cong\pi_m(D(2))\otimes\mathbb{Q}$.
Here $\bigwedge\mathcal{V}$ denotes the free graded commutative
algebra, polynomial in even degrees and exterior in odd degrees.
Let $\kappa_m=\dim_{\mathbb{Q}}\ker J_{m,\mathbb{Q}}$.
Rational surjectivity of $J_*$ implies
\begin{equation*}
 \dim\mathcal{V}^m=
 \begin{cases}1+\kappa_m,&m\text{ even},\\
 \kappa_m,&m\text{ odd}.
 \end{cases}
\end{equation*}
If only finitely many $\kappa_m$ were nonzero, the Hilbert series of
the underlying cochain algebra would be $U(t^2)\Phi(t)$, where
\begin{equation*}
 \Phi(t)=\prod_{\substack{m\geq2\\m\text{ even}}}(1-t^m)^{-\kappa_m}
 \prod_{\substack{m\geq3\\m\text{ odd}}}(1+t^m)^{\kappa_m}
\end{equation*}
is a finite product. The Hilbert series of a graded vector space
$E$ is $\sum_m(\dim E^m)t^m$.
Cohomology dimensions do not exceed cochain dimensions, so
$F_\infty(t^2)/U(t^2)\leq\Phi(t)$ for $0<t<1$.
The right side has at most a finite order pole at $1$, again a
contradiction. Thus $\kappa_m>0$ for infinitely many $m\geq8$.
Exactness of tensoring with $\mathbb{Q}$ gives infinite order
integral kernel elements, and \eqref{eq:homotopystability} transfers
them to the indicated finite stages.
\end{proof}

\begin{remark}
The degree eight statement concerns the first kernel after
rationalization, not possible torsion kernels in smaller degrees.
The torsion restrictions proved here concern homology and cohomology,
not homotopy groups. Formula~\eqref{eq:integralsplit} is an additive
splitting, not a product decomposition of rings or spaces; the free
module identifications in \eqref{eq:relativegroups} do not specify
bases. Proposition~\ref{prop:relative} answers the degree two case of
\cite[\S7, Ques.~IV, P.~263]{LM} after inverting $2$ only.
Possible $2$-primary torsion is not determined.
The first kernel criterion does not identify higher homotopy kernels
with relative Betti numbers, and the unboundedness argument requires
no formality assertion for $D(2)$.
\end{remark}

\section{Examples}\label{sec:examples}
\begin{example}
For $X=C_{1,2}(\mathbb{P}^{3})$, Theorem~\ref{thm:finite} gives,
with $q=t^2$,
\begin{equation*}
 P_X(t)=1+q+2q^2+2q^3+4q^4+3q^5+4q^6+2q^7+2q^8.
\end{equation*}
Thus $\chi(X)=21$. It also gives the Betti numbers over every
field of characteristic different from $2$.
\end{example}

\begin{example}
The hypersurface case
$C_{n-1,2}(\mathbb{P}^{n})\cong\mathbb{P}^{\binom{n+2}{2}-1}$
is classical \cite[Rem.~3.6, P.~2372]{Hu10} and gives
$F_{n-1,n}=\Pi_{m_n}$.
For the finite stage example $(p,n)=(4,8)$, the parameter space
coefficients through $q^4$ are
\begin{equation*}
\begin{array}{c|rrrrr}
 s&0&1&2&3&4\\\hline
 b_{2s}(R)&1&1&3&5&11\\
 b_{2s}(Y)&1&2&4&7&11\\
 b_{2s}(B)&1&2&5&9&16\\
 b_{2s}(X_{4,8})&1&1&2&3&6
\end{array}
\end{equation*}
The last row follows from \eqref{eq:short}.
For $n\geq9$, the analogous calculation appears in \cite[Lem.~2.5 and Prop.~2.6, P.~2368--2369]{Hu10};
the $n=8$ table is obtained from the finite formula here.
Theorem~\ref{thm:first} applies because $M_{4,8}=8$.
\end{example}

\begin{example}
For $p=0$, $\Delta_1=0$ and
$F_0(q)=((1-q)(1-q^2))^{-1}$, so that $a_{0,s}=\lfloor s/2\rfloor+1=s/2+O(1)$.
For $p=1$, $\Delta_2=-q^2+q^5$, and
\begin{equation*}
 F_1(q)=\frac{1}{2}\bigl(U_2(q)^2+U_2(q^2)\bigr)-q^2U_2(q).
\end{equation*}
The first eleven coefficients are
$(1,1,2,3,6,8,13,17,25,31,42)$.
They agree with those of $F_{1,n}$ for $n\geq11$.
\end{example}

\begin{example}
Formula~\eqref{eq:stablecoeff} gives
\begin{equation*}
\begin{array}{c|rrrrrrrrrrr}
 s&0&1&2&3&4&5&6&7&8&9&10\\\hline
 B_s&1&1&2&3&6&9&17&27&48&77&130\\
 B_s-\mathsf{p}(s)&0&0&0&0&1&2&6&12&26&47&88
\end{array}
\end{equation*}
By Corollary~\ref{cor:sharp}, the first row already gives the even Betti numbers through degree twenty of $C_{7,2}(\mathbb{P}^{17})$.
This is coefficient agreement, not homotopy stability in that range.
If $(p,n)=(1,5)$, the leading exponents of the two stabilization
differences coincide:
\begin{equation*}
 F_{1,5}(q)=1+q+2q^2+3q^3+6q^4+7q^5+O(q^6),
\end{equation*}
whose first discrepancy from $F_\infty$ is $-2q^5$.
\end{example}

\end{document}